\newtheorem{thm}{Theorem}[section]
\newtheorem{cor}[thm]{Corollary}
\newtheorem{lem}[thm]{Lemma}
\newtheorem{prop}[thm]{Proposition}
\theoremstyle{definition}
\theoremstyle{remark}
\numberwithin{equation}{section}
\begin{document}
\title[Representability and autoequivalence groups]{Representability and autoequivalence groups}
\author[Xiao-Wu Chen] {Xiao-Wu Chen}

\subjclass[2010]{18E30, 16G10, 16D90}
\date{\today}

\thanks{E-mail: xwchen$\symbol{64}$mail.ustc.edu.cn}
\keywords{cohomological functor, representability, derived equivalence, autoequivalence}%

\maketitle

\dedicatory{}%
\commby{}%

\begin{abstract}
For a finite dimensional algebra $A$, we prove that the bounded homotopy category of projective $A$-modules and the bounded derived category of $A$-modules are dual to each other via certain categories of locally-finite cohomological functors. The duality gives rise to a $2$-categorical duality between certain strict $2$-categories involving the bounded homotopy categories and bounded derived categories, respectively. We apply the $2$-categorical duality to the study of triangle autoequivalence groups. These results are analogous to the ones in [M.R. Ballard, {\em Derived categories of sheaves on singular schemes with an application to reconstruction}, Adv. Math. {\bf 227} (2011), 895--919].
\end{abstract}

\section{Introduction}

Let $k$ be field. It is well known that the homological behavior of a finite dimensional $k$-algebra $A$ with infinite global dimension is similar to that of a singular projective scheme $\mathbb{X}$. For example, the difference between the category ${\rm perf}(\mathbb{X})$ of perfect complexes and the bounded derived category $\mathbf{D}^b({\rm coh}\mbox{-}\mathbb{X})$ measures the singularity of $\mathbb{X}$; see \cite{Or04}. In the same manner, the difference between the bounded homotopy category $\mathbf{K}^b(A\mbox{-proj})$ of projective $A$-modules and the bounded derived category $\mathbf{D}^b(A\mbox{-mod})$ measures the homological singularity of $A$, or more precisely, the stable properties of the module category $A\mbox{-mod}$; see \cite{Buc, Kr}.

The following remarkable observation is made in \cite{Ball}: for such a scheme $\mathbb{X}$, there is a duality of linear categories between  ${\rm perf}(\mathbb{X})$ and $\mathbf{D}^b({\rm coh}\mbox{-}\mathbb{X})$ via the categories of cohomological functors. This duality is applied to the study of triangle autoequivalence groups and the reconstruction of $\mathbb{X}$ from these triangulated categories. We mention that  the duality is essentially proved by the representability of certain locally-finite cohomological functors. The related representability theorems are obtained in \cite{BV, CKN, Rou}.

Inspired by \cite{Ball}, it is natural to expect that such a duality holds between $\mathbf{K}^b(A\mbox{-proj})$ and $\mathbf{D}^b(A\mbox{-mod})$. The first result confirms this expectation; see Theorem \ref{thm:1}. We mention that its proof is modified from the one in \cite[Section 3]{Ball}.

Following \cite[Section 4]{Ball}, we use the pseudo-adjunctions and the above duality to obtain a $2$-categorical duality, which involves these triangulated categories. For a more precise statement of the following second result, we refer to Theorem \ref{thm:2}.

\vskip 5pt

\noindent {\bf Theorem}. {\em Let $\mathbb{K}^b$ be the strict $2$-category with objects being all finite dimensional algebras $A$, $1$-morphisms being triangle functors between $\mathbf{K}^b(A\mbox{-}{\rm proj})$, and $2$-morphisms being natural transformations. Let $\mathbb{D}^b$ be the analogous $2$-category replacing  $\mathbf{K}^b(A\mbox{-}{\rm proj})$ by $\mathbf{D}^b(A\mbox{-}{\rm mod})$. Then there is a $2$-categorical duality
$$\mathbb{K}^b\stackrel{\sim}\longrightarrow \mathbb{D}^b,$$
which acts on objects by the identity. }

\vskip 5pt

We mention that an analogue of the above theorem for projective schemes is also true by the results in \cite[Section 4]{Ball}.

The above $2$-categorical duality is applied to the study of triangle autoequivalence groups. For a triangulated category $\mathcal{T}$, we denote by ${\rm Aut}_\vartriangle(\mathcal{T})$ its triangle autoequivalence groups, whose elements are the isomorphism classes of triangle autoequivalences on $\mathcal{T}$. The derived Picard group ${\rm DPic}(A)$ is an important invariant of an algebra $A$, whose elements are the isomorphism classes of two-sided tilting complexes over $A$; see \cite{Ye, RZ}.

The following group homomorphisms are well known
$${\rm DPic}(A)\stackrel{\rm ev}\longrightarrow {\rm Aut}_\vartriangle(\mathbf{D}^b(A\mbox{-mod})) \stackrel{\rm res} \longrightarrow {\rm Aut}_\vartriangle(\mathbf{K}^b(A\mbox{-proj})).$$
Here, the homomorphism ``${\rm ev}$" sends a two-sided tilting complex $X$ to the derived tensor functor $X\otimes^\mathbb{L}_A-$, and ``${\rm res}$" denotes the restriction of autoequivalences. Moreover, the homomorphism ``${\rm ev}$" is injective. By the proof of \cite[Section 6]{CY}, the homomorphism ``${\rm res}$" is also injective.

The fundamental open question in \cite[Section 3]{Ric} asks whether any derived equivalence is standard, or equivalently, whether ``${\rm ev}$" is surjective. The following third result implies that the open question is equivalent to the surjectivity of the composition ``${\rm res}\circ {\rm ev}$"; see Corollary \ref{cor:auto}.

\vskip 5pt

\noindent {\bf Proposition}. {\em Let $A$ be a finite dimensional $k$-algebra. Then the restriction homomorphism
$${\rm Aut}_\vartriangle(\mathbf{D}^b(A\mbox{-}{\rm mod})) \stackrel{\rm res} \longrightarrow {\rm Aut}_\vartriangle(\mathbf{K}^b(A\mbox{-}{\rm proj}))$$
is an isomorphism. }
\vskip 5pt

The surjectivity of ``${\rm res}$" is equivalent to the  fact that any triangle autoequivalence on $\mathbf{K}^b(A\mbox{-}{\rm proj})$ extends to a triangle autoequivalence on $\mathbf{D}^b(A\mbox{-}{\rm mod})$. More generally, the extension of triangle functors is studied in Proposition \ref{prop:extension}.  The relation between the isomorphism ``${\rm res}$" and the work \cite{CY} is discussed at the end of this paper; see Corollary \ref{cor:stand}.

The structure of this paper is straightforward. Throughout, we require that all the algebras, categories and functors are $k$-linear over a fixed field $k$.

\section{Cohomological functors and representability}

In this section, we prove that there is a duality between the bounded homotopy category of projective modules and the bounded derived category of modules. The duality is realized via the categories of  locally-finite cohomological functors.

\subsection{A representability lemma}

Let $k$ be a field. Denote by $k\mbox{-Mod}$ the category of $k$-vector spaces and by $k\mbox{-mod}$ the full subcategory of finite dimensional vector spaces.

Let $\mathcal{C}$ be a skeletally small  triangulated category, which is $k$-linear and Hom-finite. Here, the Hom-finiteness means that each Hom space ${\rm Hom}_\mathcal{C}(X, Y)$ is finite dimensional. A $k$-linear functor $F\colon \mathcal{C}\rightarrow k\mbox{-Mod}$ is \emph{cohomological} provided that $F$ sends exact triangles to long exact sequences of vector spaces.  The cohomological functor $F$ is \emph{locally-finite} provided that the vector space $\bigoplus_{n\in \mathbb{Z}}F(\Sigma^nX)$ is finite dimensional for each object $X\in \mathcal{C}$. Denote  by ${\rm coho}(\mathcal{C})$ the category of locally-finite cohomological functors.

Let $\mathcal{T}$ be a triangulated category with arbitrary coproducts. Recall that an object $X$ is compact provided that the following canonical injection
$$\bigoplus_{i\in \Lambda} {\rm Hom}_\mathcal{T}(X, Y_i)\longrightarrow {\rm Hom}_\mathcal{T}(X, \bigoplus_{i\in \Lambda} Y_i)$$
is surjective for any objects $Y_i$ indexed by any set $\Lambda$. Denote by $\mathcal{T}^c$ the full subcategory of $\mathcal{T}$ formed by compact objects; it is  a thick triangulated subcategory. The triangulated category $\mathcal{T}$ is said to be \emph{compactly generated} provided that $\mathcal{T}^c$ is skeletally small and that for each nonzero object $Y\in \mathcal{T}$, there is a nonzero morphism $X\rightarrow Y$ with $X$ compact.

We assume further that $\mathcal{T}$ is $k$-linear. An object $X$ is \emph{locally-finite} provided that the restricted Hom functor
$${\rm Hom}_\mathcal{T}(-, X)|_{\mathcal{T}^c}\colon (\mathcal{T}^c)^{\rm op}\longrightarrow k\mbox{-Mod}$$
is locally-finite. Denote by $\mathcal{T}_{\rm lf}$  the full subcategory of $\mathcal{T}$ consisting of locally-finite objects, which is a thick triangulated subcategory.

The following fundamental result \cite[Lemma 3.3]{Ball} is a finite version of the Brown representability theorem. Its first part is due to \cite[Lemma 2.14]{CKN}, and its second part relies on \cite[Section 2]{Kr00}.

\begin{lem}\label{lem:rep}
Let $\mathcal{T}$ be a $k$-linear triangulated category which is compactly generated. Then any cohomological functor $F\colon (\mathcal{T}^c)^{\rm op}\longrightarrow k\mbox{-{\rm mod}}$ is represented by some object $X\in \mathcal{T}$, that is, isomorphic to ${\rm Hom}_\mathcal{T}(-, X)|_{\mathcal{T}^c}$. Moreover, any natural transformation between such cohomological functors is induced by some morphism between the representing objects. \hfill $\square$
\end{lem}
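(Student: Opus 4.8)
The plan is to invoke the Brown representability theorem for the compactly generated category $\mathcal{T}$ and then cut down to the finite-dimensional setting. First I would observe that any cohomological functor $F\colon (\mathcal{T}^c)^{\rm op}\rightarrow k\mbox{-mod}$ composes with the inclusion $k\mbox{-mod}\hookrightarrow k\mbox{-Mod}$ to give a cohomological functor to $k\mbox{-Mod}$; since $\mathcal{T}^c$ is skeletally small, $F$ sends coproducts in $\mathcal{T}^c$ to products. The point, however, is that to apply Brown representability I need a functor defined on all of $\mathcal{T}^{\rm op}$, not just on $(\mathcal{T}^c)^{\rm op}$. The standard device is to left Kan extend $F$ along the inclusion $\mathcal{T}^c\hookrightarrow \mathcal{T}$, or equivalently to use that the restriction functor from cohomological functors on $\mathcal{T}^{\rm op}$ (sending coproducts to products) to cohomological functors on $(\mathcal{T}^c)^{\rm op}$ is an equivalence — this is exactly the content of \cite[Section 2]{Kr00} (or the Neeman form of Brown representability). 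So $F$ extends uniquely to a coproduct-preserving cohomological functor $\widehat F\colon \mathcal{T}^{\rm op}\rightarrow k\mbox{-Mod}$, and Brown representability yields an object $X\in\mathcal{T}$ with $\widehat F\cong {\rm Hom}_\mathcal{T}(-,X)$; restricting back gives $F\cong {\rm Hom}_\mathcal{T}(-,X)|_{\mathcal{T}^c}$, which is the first assertion. (One can alternatively cite \cite[Lemma 2.14]{CKN} directly here, as the excerpt indicates.)

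For the second assertion, let $\eta\colon F\rightarrow G$ be a natural transformation between two such cohomological functors, represented by $X$ and $Y$ respectively. By the uniqueness in the extension step, $\eta$ extends to a natural transformation $\widehat\eta\colon {\rm Hom}_\mathcal{T}(-,X)\rightarrow {\rm Hom}_\mathcal{T}(-,Y)$ of coproduct-preserving functors on $\mathcal{T}^{\rm op}$. By the Yoneda lemma applied in $\mathcal{T}$, such a natural transformation is induced by a unique morphism $f\colon X\rightarrow Y$; restricting to $\mathcal{T}^c$ shows that $\eta$ is induced by $f$. Thus every natural transformation between the representing cohomological functors comes from a morphism of representing objects, as claimed.

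The main obstacle I anticipate is the extension/restriction step: justifying cleanly that a cohomological functor on $(\mathcal{T}^c)^{\rm op}$ valued in $k\mbox{-mod}$ extends to a coproduct-preserving cohomological functor on $\mathcal{T}^{\rm op}$ without any further hypotheses. This is where compact generation is essential and where the cited results \cite{CKN, Kr00} do the real work — one must know that the counit of the Kan extension is an isomorphism on compact objects and that the extended functor is again cohomological (exactness is preserved because triangles in $\mathcal{T}$ restrict appropriately). The finiteness hypothesis (values in $k\mbox{-mod}$ rather than $k\mbox{-Mod}$) plays no role in the representability itself; it is only recorded because in the intended application $\mathcal{T}_{\rm lf}$ and ${\rm coho}(\mathcal{T}^c)$ are the objects of interest, and it is exactly this finite version that was isolated as \cite[Lemma 3.3]{Ball}. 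Everything else is a formal consequence of Yoneda.
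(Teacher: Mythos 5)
Your central step --- extend $F$ from $(\mathcal{T}^c)^{\rm op}$ to a coproduct-to-product cohomological functor on all of $\mathcal{T}^{\rm op}$ and then apply Brown representability --- is precisely the approach whose failure is the subject of the cited paper \cite{CKN} (\emph{Failure of Brown representability in derived categories}). The asserted equivalence between coproduct-preserving cohomological functors on $\mathcal{T}^{\rm op}$ and cohomological functors on $(\mathcal{T}^c)^{\rm op}$ is false in general: without further hypotheses the restricted Yoneda functor is neither full nor dense onto the cohomological functors, and the left Kan extension of a cohomological functor need not be cohomological. Consequently your closing remark that ``the finiteness hypothesis plays no role in the representability itself'' is exactly backwards --- the finite-dimensionality of the values is the hypothesis that rescues the statement, and any proof must use it in an essential way. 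The second assertion of the lemma is affected as well, since your lifting of $\eta$ to $\widehat\eta$ rests on the same unjustified extension step.

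The argument the paper is implicitly invoking (via \cite[Lemma 3.3]{Ball}, \cite[Lemma 2.14]{CKN} and \cite[Section 2]{Kr00}) runs differently. Because $F$ takes values in $k\mbox{-mod}$, one has $F\cong DDF$ for $D={\rm Hom}_k(-,k)$. The covariant functor $DF\colon \mathcal{T}^c\to k\mbox{-mod}$ is homological, hence a filtered colimit of representable functors, i.e.\ a flat object of the category of additive functors on $\mathcal{T}^c$; dualizing, $F\cong D(DF)$ is an \emph{injective} object of the category of additive functors $(\mathcal{T}^c)^{\rm op}\to k\mbox{-Mod}$. Krause's theory of pure-injectivity then shows that every injective cohomological functor on $(\mathcal{T}^c)^{\rm op}$ is isomorphic to ${\rm Hom}_\mathcal{T}(-,X)|_{\mathcal{T}^c}$ for a pure-injective object $X\in\mathcal{T}$, and moreover that for such $X$ every natural transformation into ${\rm Hom}_\mathcal{T}(-,X)|_{\mathcal{T}^c}$ from a restricted representable functor is induced by a morphism in $\mathcal{T}$; this is what yields the second assertion. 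If you do not wish to reproduce this machinery, the honest proof is the citation the paper gives; as written, your sketch has a genuine gap at its key step.
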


Recall that a morphism $f\colon X\rightarrow Y$ in $\mathcal{T}_{\rm lf}$ is \emph{phantom} provided that each composition $f\circ g$ is zero for any morphism $g\colon C\rightarrow X$ with $C$ compact. These phantom morphisms form a two-sided ideal $\mathbf{ph}$ of $\mathcal{T}_{\rm lf}$. Denote by $\mathcal{T}_{\rm lf}/\mathbf{ph}$ the factor category of $\mathcal{T}_{lf}$ by the phantom ideal.

\begin{cor}\label{cor:coho}
Let $\mathcal{T}$ be a $k$-linear triangulated category which is compactly generated. Then the restricted Yoneda functor
$$\mathcal{T}_{\rm lf} \longrightarrow {\rm coho}((\mathcal{T}^c)^{\rm op}), \quad X\mapsto {\rm Hom}_\mathcal{T}(-, X)|_{\mathcal{T}^c}$$
is full and dense. In particular, it induces an equivalence of categories
$$\mathcal{T}_{\rm lf}/\mathbf{ph} \stackrel{\sim}\longrightarrow {\rm coho}((\mathcal{T}^c)^{\rm op}).$$
\end{cor}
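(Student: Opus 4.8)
The plan is to deduce Corollary \ref{cor:coho} directly from Lemma \ref{lem:rep}, treating the two assertions (fullness-plus-density, and the induced equivalence modulo phantoms) as formal consequences. First I would observe that the restricted Yoneda functor indeed lands in ${\rm coho}((\mathcal{T}^c)^{\rm op})$: for $X \in \mathcal{T}_{\rm lf}$, the functor ${\rm Hom}_\mathcal{T}(-, X)|_{\mathcal{T}^c}$ is cohomological because $\mathcal{T}^c$ is a triangulated subcategory and ${\rm Hom}$ is cohomological, and it is locally-finite by the very definition of $\mathcal{T}_{\rm lf}$. So the functor is well-defined.

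Next, for density: given $F \in {\rm coho}((\mathcal{T}^c)^{\rm op})$, by definition $F\colon (\mathcal{T}^c)^{\rm op} \to k\mbox{-mod}$ is cohomological (locally-finiteness in particular forces each $F(C)$ finite dimensional), so Lemma \ref{lem:rep} gives $X \in \mathcal{T}$ with $F \cong {\rm Hom}_\mathcal{T}(-, X)|_{\mathcal{T}^c}$. This $X$ automatically lies in $\mathcal{T}_{\rm lf}$, again by the definition of locally-finite objects, since $\bigoplus_n {\rm Hom}_\mathcal{T}(\Sigma^n C, X) \cong \bigoplus_n F(\Sigma^n C)$ is finite dimensional. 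Hence the functor is dense. For fullness: given $X, Y \in \mathcal{T}_{\rm lf}$ and a natural transformation $\eta\colon {\rm Hom}_\mathcal{T}(-,X)|_{\mathcal{T}^c} \to {\rm Hom}_\mathcal{T}(-,Y)|_{\mathcal{T}^c}$, the second part of Lemma \ref{lem:rep} yields a morphism $f\colon X \to Y$ in $\mathcal{T}$ inducing $\eta$; one should note that $X, Y$ being locally-finite is exactly what makes the two restricted Hom functors land in $k\mbox{-mod}$, so the lemma applies. This proves fullness.

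For the final equivalence, I would invoke the standard fact that a full and dense functor $G\colon \mathcal{A} \to \mathcal{B}$ induces an equivalence $\mathcal{A}/\mathcal{I} \xrightarrow{\sim} \mathcal{B}$, where $\mathcal{I}$ is the ideal of morphisms killed by $G$; it remains only to identify that ideal with $\mathbf{ph}$. A morphism $f\colon X \to Y$ in $\mathcal{T}_{\rm lf}$ is sent to zero precisely when ${\rm Hom}_\mathcal{T}(C, f) = 0$ for every compact $C$, i.e. $f \circ g = 0$ for all $g\colon C \to X$ with $C$ compact — which is exactly the definition of $f$ being phantom. Since $G$ is full and dense, the induced functor $\mathcal{T}_{\rm lf}/\mathbf{ph} \to {\rm coho}((\mathcal{T}^c)^{\rm op})$ is then full, dense, and faithful, hence an equivalence.

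I do not anticipate a serious obstacle here; the work is essentially bookkeeping. The one point requiring a little care is checking that the hypotheses of Lemma \ref{lem:rep} are met on the nose in each application — specifically that ``locally-finite cohomological functor'' on $(\mathcal{T}^c)^{\rm op}$ coincides with ``cohomological functor into $k\mbox{-mod}$'' (using that local finiteness implies finite-dimensional values), and dually that ``locally-finite object'' is exactly the condition making the representable functor take values in $k\mbox{-mod}$. Once these identifications are in place, density and fullness are immediate from the two halves of the lemma, and the passage to the quotient category is the routine general-category-theory fact recalled above.
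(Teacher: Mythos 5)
Your proposal is correct and follows essentially the same route as the paper: density and fullness are read off from the two halves of Lemma \ref{lem:rep} (after noting that locally-finite functors take values in $k\mbox{-mod}$ and that representing objects of such functors are automatically locally-finite), and the equivalence modulo $\mathbf{ph}$ comes from identifying the kernel ideal of the restricted Yoneda functor with the phantom ideal. The paper's proof is just a terser version of the same bookkeeping.
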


\begin{proof}
The first assertion follows from Lemma \ref{lem:rep}. It suffices to observe by definition that a morphism $f$ is phantom if and only if ${\rm Hom}_\mathcal{T}(-, f)|_{\mathcal{T}^c}=0$.
\end{proof}

\subsection{Duality via cohomological functors}

Let $A$ be a finite dimensional $k$-algebra. Denote by $A\mbox{-Mod}$ the category of left $A$-modules. We denote by $A\mbox{-mod}$ and $A\mbox{-proj}$ the full subcategories consisting of finitely generated $A$-modules and finitely generated projective $A$-modules, respectively.

We use the cohomological notation. Denote a complex of $A$-modules by $X=(X^n, d_X^n)_{n\in \mathbb{Z}}$. The $n$-th cohomology of $X$ is denoted by $H^n(X)$. For each $n$, we denote by $\sigma_{\geq n}(X)$ the subcomplex of $X$ consisting of components with degree at least $n$. Denote by $\mathbf{K}(A\mbox{-Mod})$ and $\mathbf{D}(A\mbox{-Mod})$ the homotopy category and derived category of $A\mbox{-Mod}$, respectively. The translation of complexes is denoted by $\Sigma$.

We collect some well-known facts for later use. The following observation is contained in \cite[Lemma 2.6]{KrYe}.

\begin{lem}\label{lem:ht1}
Let $f\colon P\rightarrow X$ be a chain morphism such that $P$ consists of projective modules and $H^n(X)=0$ for $n<0$. Assume that the restriction $f|_{\sigma_{\geq 0}(P)}\colon \sigma_{\geq 0}(P)\rightarrow X$ is homotopic to zero. Then $f$ is homotopic to zero.
\end{lem}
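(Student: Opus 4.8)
The plan is to build a homotopy for $f$ degree by degree, going from low degrees to high degrees, and to feed in the hypothesis in the range where we do not already have control. Write $s|_{\sigma_{\geq 0}(P)}$ for the homotopy witnessing that $f|_{\sigma_{\geq 0}(P)}\simeq 0$; concretely this gives maps $s^n\colon P^n\to X^{n-1}$ for $n\geq 1$ (and $s^0\colon P^0\to X^{-1}$) satisfying $f^n = d_X^{n-1}s^n + s^{n+1}d_P^n$ for all $n\geq 0$. What we must do is extend this to maps $s^n$ for $n<0$ so that the same identity holds in every degree. So the real content is: given the partial homotopy defined in degrees $\geq 0$, construct $s^n\colon P^n\to X^{n-1}$ for $n\leq -1$ inductively in decreasing $n$.

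First I would set up the induction. Suppose $s^{n+1},s^{n+2},\dots$ have been constructed (the base case $n+1=0$ being the given data) so that $f^m = d_X^{m-1}s^m + s^{m+1}d_P^m$ holds for all $m\geq n+1$. I want $s^n\colon P^n\to X^{n-1}$ with $f^n - s^{n+1}d_P^n = d_X^{n-1}s^n$. The standard move is to check that the left-hand side, call it $g^n := f^n - s^{n+1}d_P^n\colon P^n\to X^n$, actually lands in the cycles $Z^n(X) = \ker d_X^n$, and then — crucially — that it lands in the image $\operatorname{im} d_X^{n-1} = B^n(X)$, because $n<0$ forces $H^n(X)=0$, so $Z^n(X)=B^n(X)$. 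To see $g^n$ is a cycle: $d_X^n g^n = d_X^n f^n - d_X^n s^{n+1} d_P^n = f^{n+1}d_P^n - d_X^n s^{n+1}d_P^n = (f^{n+1} - d_X^n s^{n+1})d_P^n = s^{n+2}d_P^{n+1}d_P^n = 0$, using the chain-map property of $f$ and the inductive identity in degree $n+1$. Hence $g^n$ factors through $B^n(X) = \operatorname{im}(d_X^{n-1}\colon X^{n-1}\to X^n)$. Since $P^n$ is projective and $d_X^{n-1}\colon X^{n-1}\twoheadrightarrow B^n(X)$ is an epimorphism onto $B^n(X)$, the map $g^n\colon P^n\to B^n(X)$ lifts through $d_X^{n-1}$, giving $s^n\colon P^n\to X^{n-1}$ with $d_X^{n-1}s^n = g^n = f^n - s^{n+1}d_P^n$, which is exactly the homotopy identity in degree $n$. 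This closes the induction, and assembling all the $s^n$ (for $n\geq 1$ from the hypothesis, $s^0$ from the hypothesis, and $s^n$ for $n<0$ just constructed) gives a full homotopy $f\simeq 0$.

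The main obstacle — really the only place anything happens — is the transition at the boundary between the "given" part and the "constructed" part, i.e. making sure the identity $f^0 = d_X^{-1}s^0 + s^1 d_P^0$ supplied by the hypothesis is compatible with the first inductive step producing $s^{-1}$; the cycle computation $d_X^{-1}g^{-1}=0$ at $n=-1$ uses precisely the degree-$0$ identity, so one must be careful that the hypothesis really gives $s^0$ and $s^1$ and the relation in degree $0$, not merely in degrees $\geq 1$. Once the indexing of $\sigma_{\geq 0}(P)$ and its homotopy is pinned down correctly, everything else is the routine projectivity-lifting argument above, and the hypothesis $H^n(X)=0$ for $n<0$ is used exactly once per step to pass from cycles to boundaries.
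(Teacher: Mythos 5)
Your proof is correct. It differs in form from the paper's argument: the paper applies the cohomological functor $\mathrm{Hom}_{\mathbf{K}(A\text{-}\mathrm{Mod})}(-,X)$ to the canonical triangle $\sigma_{\geq 0}(P)\to P\to P/\sigma_{\geq 0}(P)\to \Sigma\sigma_{\geq 0}(P)$, observes that $\mathrm{Hom}_{\mathbf{K}(A\text{-}\mathrm{Mod})}(P/\sigma_{\geq 0}(P),X)=0$ (since $P/\sigma_{\geq 0}(P)$ is a complex of projectives concentrated in degrees $<0$ while $H^n(X)=0$ there), and concludes from the resulting long exact sequence that the restriction map $\mathrm{Hom}_{\mathbf{K}}(P,X)\to \mathrm{Hom}_{\mathbf{K}}(\sigma_{\geq 0}(P),X)$ is injective. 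You instead extend the given null-homotopy degree by degree, which is the explicit form of exactly the same mechanism: your inductive step (the cycle computation followed by the lift of $g^n$ through $X^{n-1}\twoheadrightarrow B^n(X)$ using projectivity of $P^n$ and $Z^n(X)=B^n(X)$ for $n<0$) is precisely the standard proof of the vanishing statement the paper invokes. Your version is more self-contained and in fact yields slightly more information (any null-homotopy of $f|_{\sigma_{\geq 0}(P)}$ extends to one of $f$), at the cost of the bookkeeping at the boundary degree $n=-1$, which you correctly identify and handle; the paper's version is shorter but delegates the real work to the quoted vanishing of the Hom space out of the quotient complex.
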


\begin{proof}
We apply the cohomological functor ${\rm Hom}_{\mathbf{K}(A\mbox{-}{\rm Mod})}(-, X)$ to the canonical triangle
$$\sigma_{\geq 0}(P) \longrightarrow P \longrightarrow P/{\sigma_{\geq 0}(P)} \longrightarrow \Sigma\sigma_{\geq 0}(P).$$
By assumption, we observe that ${\rm Hom}_{\mathbf{K}(A\mbox{-}{\rm Mod})} (P/{\sigma_{\geq 0}(P)}, X)=0$. We deduce  that the restriction map
$${\rm Hom}_{\mathbf{K}(A\mbox{-}{\rm Mod})}(P, X)\longrightarrow {\rm Hom}_{\mathbf{K}(A\mbox{-}{\rm Mod})}(\sigma_{\geq 0}(P), X)$$ is injective. Then the result follows.
\end{proof}

For an $A$-module $M$, we denote by $\mathbf{i}(M)$ the injective resolution of $M$. Then we have a quasi-isomorphism $a_M\colon M\rightarrow \mathbf{i}(M)$, where $M$ is viewed as a stalk complex concentrated on degree zero.

\begin{lem}
Let $X$ be a complex consisting of injective $A$-modules. Then there is an isomorphism
\begin{align}\label{iso:inj1}
{\rm Hom}_{\mathbf{K}(A\mbox{-}{\rm Mod})}(\mathbf{i}(M), X)\longrightarrow {\rm Hom}_{\mathbf{K}(A\mbox{-}{\rm Mod})}(M, X), \quad f\mapsto f\circ a_M.
\end{align}
In particular, we have an isomorphism
\begin{align}\label{iso:inj2}
{\rm Hom}_{\mathbf{K}(A\mbox{-}{\rm Mod})}(\mathbf{i}(A), \Sigma^n(X))\stackrel{\sim}\longrightarrow H^n(X)
\end{align}
 for each integer $n$.
\end{lem}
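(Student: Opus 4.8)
The plan is to prove the isomorphism \eqref{iso:inj1} first and then derive \eqref{iso:inj2} as the special case $M = A$. To establish \eqref{iso:inj1}, I would exploit the fact that the cone of the quasi-isomorphism $a_M\colon M \to \mathbf{i}(M)$ is an acyclic complex. Concretely, complete $a_M$ to an exact triangle $M \xrightarrow{a_M} \mathbf{i}(M) \to C \to \Sigma M$ in $\mathbf{K}(A\mbox{-Mod})$, where $C$ is acyclic. Applying the cohomological functor ${\rm Hom}_{\mathbf{K}(A\mbox{-Mod})}(-, X)$ to this triangle, it suffices to show that ${\rm Hom}_{\mathbf{K}(A\mbox{-Mod})}(C, X) = 0 = {\rm Hom}_{\mathbf{K}(A\mbox{-Mod})}(\Sigma C, X)$; then the map $f \mapsto f \circ a_M$ is sandwiched between two zero groups in the long exact sequence and is therefore an isomorphism. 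So the whole proof reduces to the standard fact that, for $X$ a complex of injectives and $C$ an acyclic complex, every chain map $C \to X$ is null-homotopic.

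For that fact I would give the usual hands-on homotopy construction. Given a chain map $g\colon C \to X$ with $C$ acyclic and each $X^n$ injective, one builds the homotopy $s^n\colon C^n \to X^{n-1}$ inductively on $n$. At each stage one uses that $C$ is acyclic — so $\ker d_C^n = \operatorname{im} d_C^{n-1}$ — together with the injectivity of $X^{n-1}$ to extend a partially-defined map along the inclusion $\operatorname{im} d_C^{n-1} \hookrightarrow C^{n-1}$ (after checking that the relevant map kills the boundaries, which is where the induction hypothesis $g^n - d_X^{n-1} s^n$ agrees with $s^{n+1} d_C^n$ on cycles feeds in). This is entirely routine; alternatively, one can cite it as a well-known property of complexes of injectives, but since the paper is being careful about such facts it is probably cleanest to record the inductive argument in a line or two, or simply invoke that an acyclic complex is right-orthogonal-free against injective complexes in the homotopy category.

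For the second statement, I would apply \eqref{iso:inj1} with $M = A$ and with $X$ replaced by $\Sigma^n(X)$ (which is again a complex of injectives), obtaining
\[
{\rm Hom}_{\mathbf{K}(A\mbox{-Mod})}(\mathbf{i}(A), \Sigma^n(X)) \cong {\rm Hom}_{\mathbf{K}(A\mbox{-Mod})}(A, \Sigma^n(X)).
\]
Now $A$ is the free module of rank one viewed as a stalk complex in degree zero, so a chain map $A \to \Sigma^n(X)$ is just an element of $\ker d_{\Sigma^n X}^0 = \ker d_X^n$ that picks out where $1 \in A$ goes, and a homotopy is exactly the choice of a preimage under $d_X^{n-1}$; hence ${\rm Hom}_{\mathbf{K}(A\mbox{-Mod})}(A, \Sigma^n(X)) \cong \ker d_X^n / \operatorname{im} d_X^{n-1} = H^n(X)$. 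Tracing through the identifications and checking naturality in $n$ (both sides being compatible with $\Sigma$) gives the stated isomorphism \eqref{iso:inj2}.

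The main obstacle, such as it is, is purely bookkeeping: making sure the reduction via the triangle $M \to \mathbf{i}(M) \to C \to \Sigma M$ is legitimate (one must know $\mathbf{i}(M)$ is genuinely a complex of injectives, which is the definition of an injective resolution, and that $C$ is acyclic, which is the statement that $a_M$ is a quasi-isomorphism) and then correctly identifying ${\rm Hom}_{\mathbf{K}}(A, \Sigma^n X)$ with $H^n(X)$ at the level of cycles and boundaries rather than just abstractly. There is no deep difficulty; the injectivity of the $X^n$ is exactly what is needed and is used precisely once, in the vanishing ${\rm Hom}_{\mathbf{K}(A\mbox{-Mod})}(C, X) = 0$ for acyclic $C$.
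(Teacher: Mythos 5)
Your overall strategy is sound and, unlike the paper, self-contained: the paper simply cites \cite[Lemma 2.1]{Kr} for the isomorphism (\ref{iso:inj1}) and then uses the canonical identification ${\rm Hom}_{\mathbf{K}(A\mbox{-}{\rm Mod})}(A,\Sigma^n(X))\simeq H^n(X)$ for (\ref{iso:inj2}), exactly as you do in your last step. Your reduction via the triangle $M\to\mathbf{i}(M)\to C\to\Sigma M$ and the long exact sequence is the standard way to prove the cited lemma, so the routes agree in substance; yours just unwinds the reference.

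There is, however, one point you must state more carefully, because as written your key auxiliary claim is false. It is \emph{not} true that every chain map from an acyclic complex $C$ to an arbitrary complex $X$ of injectives is null-homotopic: for an unbounded $X$ this would say every complex of injectives is K-injective, and the failure of exactly this statement is what makes $\mathbf{K}(A\mbox{-Inj})$ differ from $\mathbf{D}(A\mbox{-Mod})$ (e.g.\ over a self-injective algebra the two-sided acyclic complex $\cdots\to A\to A\to\cdots$ with differential given by a radical element is an acyclic complex of injectives that is not contractible). What saves your argument is that the cone $C$ of $a_M$ is concentrated in degrees $\geq -1$, hence \emph{bounded below}, and for a bounded-below acyclic complex the inductive construction of the homotopy $s^n\colon C^n\to X^{n-1}$ does go through: the boundedness supplies the base case $s^n=0$ for $n\ll 0$, acyclicity gives $\ker d_C^n=\operatorname{im}d_C^{n-1}$ so that $g^n-d_X^{n-1}s^n$ factors through $\operatorname{im}d_C^n$, and injectivity of $X^n$ extends the resulting map along $\operatorname{im}d_C^n\hookrightarrow C^{n+1}$. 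You should add the words ``bounded below'' to the vanishing statement and note explicitly that this is where the induction starts; with that correction the proof is complete. The identification of ${\rm Hom}_{\mathbf{K}(A\mbox{-}{\rm Mod})}(A,\Sigma^n(X))$ with $H^n(X)$ via cycles modulo boundaries (using that $A$ is free, so ${\rm Hom}_A(A,-)$ is exact) is correct as you describe it.
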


\begin{proof}
The  first isomorphism is due to \cite[Lemma 2.1]{Kr}. For the second, we just use the canonical isomorphism
${\rm Hom}_{\mathbf{K}(A\mbox{-}{\rm Mod})}(A, \Sigma^n(X))\simeq H^n(X).$
\end{proof}

We denote by ${\rm rad}(A)$ the Jacobson radical of $A$ and set $A_0=A/{{\rm rad}(A)}$.  For a complex $X$, we denote by $\tau_{<n}(X)$ and $\tau_{>n}(X)$ the good truncations. More precisely, we have $\tau_{<n}(X)= \cdots \rightarrow X^{n-3} \rightarrow X^{n-2} \rightarrow {\rm Ker}d_X^{n-1} \rightarrow 0$ and $\tau_{>n}(X)=0\rightarrow {\rm Cok} d_X^{n} \rightarrow X^{n+2} \rightarrow X^{n+3} \rightarrow \cdots$.

\begin{lem}\label{lem:split}
Let $X$ be a complex consisting of injective $A$-modules. Then the following statements are equivalent:
\begin{enumerate}
\item ${\rm Hom}_{\mathbf{K}(A\mbox{-}{\rm Mod})}(\mathbf{i}(A_0), X)=0$.
\item $H^0(X)=0$ and ${\rm Ker}d_X^{-1}$ is an injective $A$-module.
\item The complex $X$ is homotopic to $\tau_{<0}(X)\oplus \tau_{>0}(X)$.
\end{enumerate}
In this situation, the complex $\tau_{<0}(X)\oplus \tau_{>0}(X)$ also consists of injective $A$-modules.
\end{lem}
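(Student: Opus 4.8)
The plan is to reduce condition (1) to a statement about the three consecutive terms $X^{-1}\xrightarrow{d_X^{-1}}X^0\xrightarrow{d_X^0}X^1$, and then to play the injectivity of these terms off against the structure of $A_0=A/{\rm rad}(A)$, using that as a left module $A_0$ has every simple $A$-module among its direct summands. Write $\mathbf{K}=\mathbf{K}(A\mbox{-}{\rm Mod})$, put $Z^0={\rm Ker}\,d_X^0$ and $B^0={\rm Im}\,d_X^{-1}$ so that $H^0(X)=Z^0/B^0$, and regard $A_0$ as a stalk complex concentrated in degree $0$. Since $X$ consists of injective modules, the isomorphism \eqref{iso:inj1} identifies ${\rm Hom}_\mathbf{K}(\mathbf{i}(A_0),X)$ with ${\rm Hom}_\mathbf{K}(A_0,X)$; spelling out chain maps and homotopies out of the stalk complex $A_0$ shows that this last group vanishes exactly when every $A$-homomorphism $A_0\to Z^0$ factors through $d_X^{-1}\colon X^{-1}\to Z^0$. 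With this reformulation $(3)\Rightarrow(1)$ is immediate: if $X\simeq\tau_{<0}(X)\oplus\tau_{>0}(X)$ in $\mathbf{K}$, then ${\rm Hom}_\mathbf{K}(A_0,X)$ splits accordingly and each summand vanishes, since $\tau_{<0}(X)$ and $\tau_{>0}(X)$ are both zero in degree $0$ and hence receive no nonzero chain map from the stalk complex $A_0$.

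For $(1)\Leftrightarrow(2)$ I would factor $d_X^{-1}$ as $X^{-1}\twoheadrightarrow B^0\hookrightarrow Z^0$; the condition from the first paragraph then breaks into the surjectivity of (a) ${\rm Hom}_A(A_0,X^{-1})\to{\rm Hom}_A(A_0,B^0)$ and of (b) ${\rm Hom}_A(A_0,B^0)\to{\rm Hom}_A(A_0,Z^0)$. Applying ${\rm Hom}_A(A_0,-)$ to $0\to{\rm Ker}\,d_X^{-1}\to X^{-1}\to B^0\to 0$ and using that $X^{-1}$ is injective, (a) is equivalent to ${\rm Ext}^1_A(A_0,{\rm Ker}\,d_X^{-1})=0$; reducing via the simple summands of $A_0$ to the simple modules, and then invoking Baer's criterion (legitimate because every cyclic $A$-module has finite length), one sees this is equivalent to ${\rm Ker}\,d_X^{-1}$ being injective. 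Granting that, the sequence $0\to{\rm Ker}\,d_X^{-1}\to X^{-1}\to B^0\to 0$ splits, so $B^0$ is a direct summand of the injective module $X^{-1}$ and is therefore injective; hence the inclusion $B^0\hookrightarrow Z^0$ splits, say $Z^0=B^0\oplus C$, and (b)---which says that every $A$-map $A_0\to Z^0$ has image in $B^0$---forces $C=0$: a nonzero $C$ would contain a simple submodule $S$, and the composite of a projection $A_0\to S$ with the inclusions $S\hookrightarrow C\hookrightarrow Z^0$ would have image $S$, not contained in $B^0$. Thus $Z^0=B^0$, i.e.\ $H^0(X)=0$, which is $(1)\Rightarrow(2)$; the converse runs the same computation in reverse, or follows from $(2)\Rightarrow(3)\Rightarrow(1)$.

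For $(2)\Rightarrow(3)$ and the last assertion, I would first note that under (2) the modules ${\rm Ker}\,d_X^{-1}$, ${\rm Ker}\,d_X^0=B^0$ (here $H^0(X)=0$ is used) and ${\rm Im}\,d_X^0$ are all injective, the last because it is isomorphic to a complement of $B^0={\rm Ker}\,d_X^0$ in $X^0$. Fixing module splittings $X^{-1}={\rm Ker}\,d_X^{-1}\oplus U$, $X^0={\rm Ker}\,d_X^0\oplus V$ and $X^1={\rm Im}\,d_X^0\oplus W$, a direct verification---using ${\rm Im}\,d_X^{-2}\subseteq{\rm Ker}\,d_X^{-1}$, ${\rm Im}\,d_X^{-1}={\rm Ker}\,d_X^0$ and ${\rm Im}\,d_X^0\subseteq{\rm Ker}\,d_X^1$---shows that, with respect to these decompositions, $X$ is a direct sum of complexes: the two contractible complexes $(U\xrightarrow{\sim}{\rm Ker}\,d_X^0)$ and $(V\xrightarrow{\sim}{\rm Im}\,d_X^0)$, concentrated in degrees $-1,0$ and $0,1$ respectively; the complex $\tau_{<0}(X)$; and the complex $0\to W\to X^2\to X^3\to\cdots$ with $W$ in degree $1$. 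The last of these is isomorphic to $\tau_{>0}(X)$ via $W\cong X^1/{\rm Im}\,d_X^0={\rm Cok}\,d_X^0$. Discarding the contractible summands gives $X\simeq\tau_{<0}(X)\oplus\tau_{>0}(X)$, and since every module occurring as a component---the various $X^n$ together with ${\rm Ker}\,d_X^{-1}$ and $W\cong{\rm Cok}\,d_X^0$---is injective, the final sentence of the statement follows as well.

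The step I expect to be the main obstacle is $(1)\Rightarrow(2)$: one must extract, from the single hypothesis that every chain map $\mathbf{i}(A_0)\to X$ is null-homotopic, both the injectivity of ${\rm Ker}\,d_X^{-1}$ (via the ${\rm Ext}^1$ computation and Baer's criterion) and the vanishing of $H^0(X)$ (via splitting off $B^0$ from $Z^0$ and forcing the complement to vanish). By contrast, the direct-sum decomposition behind $(2)\Rightarrow(3)$ is longer to write down but routine once the relevant kernels and images are known to be injective.
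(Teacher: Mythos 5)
Your proof is correct, and its overall architecture coincides with the paper's: both reduce condition (1) to the vanishing of ${\rm Hom}_{\mathbf{K}(A\mbox{-}{\rm Mod})}(A_0,X)$ via (\ref{iso:inj1}), both detect the injectivity of ${\rm Ker}\,d_X^{-1}$ through ${\rm Ext}^1_A(A_0,-)$ computed from the injective terms of $X$, and both prove $(2)\Rightarrow(3)$ by splitting off a contractible complex of injectives between the two truncations (the paper peels off one three-term contractible piece ${\rm Im}\,d_X^{-1}\to X^0\to{\rm Im}\,d_X^0$ where you peel off two two-term ones; this is cosmetic). The one genuine divergence is in how $(1)\Rightarrow(2)$ extracts $H^0(X)=0$. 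The paper gets it first and for free: since $\mathbf{i}(A)$ is an iterated extension of direct summands of $\mathbf{i}(A_0)$ (filter $A$ by powers of the radical), condition (1) forces ${\rm Hom}_{\mathbf{K}}(\mathbf{i}(A),X)=0$, which is $H^0(X)=0$ by (\ref{iso:inj2}); the vanishing of $H^0$ is then exactly what makes $X^{-1}\to X^0\to X^1$ the start of an injective copresentation of ${\rm Ker}\,d_X^{-1}$, so that ${\rm Hom}_{\mathbf{K}}(A_0,X)\simeq{\rm Ext}^1_A(A_0,{\rm Ker}\,d_X^{-1})$. You instead factor the null-homotopy condition through ${\rm Im}\,d_X^{-1}$, obtain the injectivity of ${\rm Ker}\,d_X^{-1}$ first from the genuinely short exact sequence $0\to{\rm Ker}\,d_X^{-1}\to X^{-1}\to{\rm Im}\,d_X^{-1}\to 0$, and then kill the complement of ${\rm Im}\,d_X^{-1}$ in ${\rm Ker}\,d_X^0$ by a socle argument. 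Both routes are sound; yours avoids the radical filtration of $A$ and the isomorphism (\ref{iso:inj2}) at the cost of the extra socle step, and it has the mild advantage of not needing to order the two conclusions of (2). One small point worth making explicit in your write-up: the condition ``every map $A_0\to Z^0$ factors through $d_X^{-1}$'' is the surjectivity of a composite, and it splits into your conditions (a) and (b) only because the second map ${\rm Hom}_A(A_0,B^0)\to{\rm Hom}_A(A_0,Z^0)$ is injective (being induced by a monomorphism).
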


\begin{proof}
For ``$(1)\Rightarrow (2)$", we observe that $\mathbf{i}(A)$ is an iterated extension of direct summands of $\mathbf{i}(A_0)$ in $\mathbf{K}(A\mbox{-}{\rm Mod})$. It follows that ${\rm Hom}_{\mathbf{K}(A\mbox{-}{\rm Mod})}(\mathbf{i}(A), X)=0$. By (\ref{iso:inj2}) we have $H^0(X)=0$. We observe an isomorphism  $${\rm Ext}_A^1(A_0, {\rm Ker}d_X^{-1})\simeq {\rm Hom}_{\mathbf{K}(A\mbox{-}{\rm Mod})}(A_0, X),$$
 since  $0\rightarrow {\rm Ker}d_X^{-1} \rightarrow X^{-1} \rightarrow X^0 \rightarrow X^{1}$ is a part of an injective resolution of ${\rm Ker}d_X^{-1}$. Applying (\ref{iso:inj1}) for $M=A_0$ and using this isomorphism, we deduce ${\rm Ext}_A^1(A_0, {\rm Ker}d_X^{-1})=0$, which implies that ${\rm Ker}d_X^{-1}$ is an injective $A$-module.

For ``$(2)\Rightarrow (3)$", we observe that the $A$-modules ${\rm Im} d_X^{-1}$, ${\rm Im} d_X^0$ and ${\rm Cok} d_X^0$ are all injective. It follows that as a complex, $X$ is isomorphic to $\tau_{<0}(X)\oplus \tau_{>0}(X)\oplus E$, where $E=\cdots \rightarrow 0\rightarrow {\rm Im} d_X^{-1}\rightarrow X^0\rightarrow {\rm Im} d_X^0\rightarrow 0\rightarrow \cdots$ is homotopic to zero.  In view of (\ref{iso:inj1}) for $M=A_0$, the remaining implication ``$(3)\Rightarrow (1)$" is trivial. \end{proof}

\begin{cor}\label{cor:trunca}
Let $X$ be a complex consisting of injective $A$-modules and $n_0>0$. Assume that  ${\rm Hom}_{\mathbf{K}(A\mbox{-}{\rm Mod})}(\mathbf{i}(A_0), \Sigma^n(X))=0$ whenever $|n|\geq n_0$. Then $X$ is homotopic to $\tau_{<n_0}\tau_{>-n_0}(X)$, which is also consisting of injective $A$-modules.
\end{cor}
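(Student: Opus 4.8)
The plan is to strip the two tails off $X$ one at a time, by feeding suitable shifts of $X$ into Lemma~\ref{lem:split}. First I would record the consequences of the hypothesis. For each integer $n$ with $|n|\geq n_0$, the complex $\Sigma^n(X)$ consists of injective modules and satisfies condition~(1) of Lemma~\ref{lem:split}; by the implication $(1)\Rightarrow(2)$ this gives $H^n(X)=H^0(\Sigma^n X)=0$ and that $\ker d_X^{n-1}=\ker d_{\Sigma^n X}^{-1}$ is injective, and by condition~(3) together with the last sentence of Lemma~\ref{lem:split} the complex $\Sigma^n(X)$ is homotopic to $\tau_{<0}(\Sigma^n X)\oplus\tau_{>0}(\Sigma^n X)$, which again consists of injectives. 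Summarizing: $H^m(X)=0$ for all $|m|\geq n_0$, and $\ker d_X^m$ is injective whenever $m\geq n_0-1$ or $m\leq -n_0-1$.

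Next I would strip the right-hand tail. Taking $n=n_0$ and using that $\Sigma$ intertwines good truncations with shifts (so $\tau_{<0}(\Sigma^{n_0}X)=\Sigma^{n_0}\tau_{<n_0}(X)$ and $\tau_{>0}(\Sigma^{n_0}X)=\Sigma^{n_0}\tau_{>n_0}(X)$), applying the autoequivalence $\Sigma^{-n_0}$ yields a homotopy equivalence $X\simeq\tau_{<n_0}(X)\oplus\tau_{>n_0}(X)$ with both summands consisting of injectives. Since $H^m(X)=0$ for $m>n_0$, the bounded-below complex $\tau_{>n_0}(X)$ is acyclic, hence null-homotopic: one peels off the bottom term (which injects into the next one and is therefore a direct summand of it) and iterates. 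Thus $X\simeq X':=\tau_{<n_0}(X)$, again a complex of injectives.

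Then I would run the same argument on the left, applied to $X'$. Since $X'=\tau_{<n_0}(X)$, we have $H^{-n_0}(X')=H^{-n_0}(X)=0$ and $\ker d_{X'}^{-n_0-1}=\ker d_X^{-n_0-1}$ injective, so by $(2)\Rightarrow(1)$ of Lemma~\ref{lem:split} the complex $\Sigma^{-n_0}(X')$ satisfies condition~(1). Lemma~\ref{lem:split} then gives $X'\simeq\tau_{<-n_0}(X')\oplus\tau_{>-n_0}(X')$ with both summands consisting of injectives, where $\tau_{<-n_0}(X')=\tau_{<-n_0}(X)$ is acyclic. This is the one point that needs care: a \emph{bounded-above} acyclic complex of injective modules need not be null-homotopic. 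But all cocycle modules of $\tau_{<-n_0}(X)$ are among the $\ker d_X^m$ with $m\leq -n_0-1$, which are injective by the first paragraph; an injective submodule of any module is a direct summand, so each cocycle splits off the corresponding term and the complex is null-homotopic. Hence $X'\simeq\tau_{>-n_0}(X')$.

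Combining the two reductions, and using that $\tau_{<n_0}$ and $\tau_{>-n_0}$ commute, $X\simeq X'\simeq\tau_{>-n_0}(X')=\tau_{<n_0}\tau_{>-n_0}(X)$ in $\mathbf{K}(A\mbox{-}{\rm Mod})$; this complex consists of injectives, its only nonstandard terms being the top term $\ker d_X^{n_0-1}$ (injective by the first paragraph) and the bottom term $X^{-n_0+1}/{\rm Im}\,d_X^{-n_0}$, which is a direct summand of $X^{-n_0+1}$ because ${\rm Im}\,d_X^{-n_0}$ is injective (using $H^{-n_0}(X)=0$ to identify $\ker d_X^{-n_0}$ with the summand ${\rm Im}\,d_X^{-n_0-1}$ of $X^{-n_0-1}$). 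When $n_0=1$ the two truncation degrees coincide and the middle complex reduces to the stalk $H^0(X)$, a summand of the injective $\ker d_X^0$; the bookkeeping is identical. I expect the null-homotopy of the left, bounded-above tail to be the main obstacle: it is precisely there that one needs injectivity of the kernels $\ker d_X^m$, and not merely vanishing of cohomology. (Alternatively, one could avoid the two-step reduction and split $X$ in one go as a direct sum of $\tau_{<n_0}\tau_{>-n_0}(X)$ and two null-homotopic tails, mimicking the proof of $(2)\Rightarrow(3)$ in Lemma~\ref{lem:split}.)
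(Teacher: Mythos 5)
Your proposal is correct and follows essentially the same route as the paper: apply Lemma \ref{lem:split} to the shifts $\Sigma^n(X)$ with $|n|\geq n_0$ to split off the two tails one at a time, and then observe that each tail is acyclic with injective cocycles, hence null-homotopic. The only differences are cosmetic (you strip the right tail first, the paper strips the left tail first) plus some extra, accurate, detail on why the bounded-above tail is contractible.
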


\begin{proof}
We apply Lemma \ref{lem:split} first to $\Sigma^{n}(X)$ for each $n\leq -n_0$. Then we have an isomorphism $X\simeq \tau_{<-n_0}(X)\oplus \tau_{>-n_0}(X)$ in $\mathbf{K}(A\mbox{-Mod})$, where $\tau_{<-n_0}(X)$ is acyclic with injective cocycles. It follows that $\tau_{<-n_0}(X)$ is homotopic to zero. Hence $X$ is homotopic to $\tau_{>-n_0}(X)$. Then we apply Lemma \ref{lem:split} to $\Sigma^{n}(\tau_{>-n_0}(X))\simeq \Sigma^n(X)$ for each $n\geq n_0$. By a similar reasoning, we obtain the required isomorphism in $\mathbf{K}(A\mbox{-Mod})$.
\end{proof}

The main concerns are the bounded homotopy category $\mathbf{K}^b(A\mbox{-proj})$ and the bounded derived category $\mathbf{D}^b(A\mbox{-mod})$. It is natural to view $\mathbf{K}^b(A\mbox{-proj})$ as a full triangulated subcategory of $\mathbf{D}^b(A\mbox{-mod})$; moreover, they are equal if and only if the algebra $A$ has finite global dimension.

The following intrinsic description  of the subcategory $\mathbf{K}^b(A\mbox{-proj})$ in $\mathbf{D}^b(A\mbox{-mod})$ is standard.

\begin{lem}\label{lem:perf}
Let $Y\in \mathbf{D}^b(A\mbox{-}{\rm mod})$. Then $Y$ lies in $\mathbf{K}^b(A\mbox{-}{\rm proj})$ if and only if ${\rm Hom}_{\mathbf{D}^b(A\mbox{-}{\rm mod})}(Y, \Sigma^i(X))=0$ for each  $X \in \mathbf{D}^b(A\mbox{-}{\rm mod})$ and $i\gg 0$. \hfill $\square$
\end{lem}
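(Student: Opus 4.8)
The plan is to prove both implications by reducing to a projective resolution of $Y$ and counting degrees; only the single test object $X=A_{0}$ (recall $A_{0}=A/{\rm rad}(A)$) will actually be used from the hypothesis.

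For the ``only if'' part I would represent $Y$ by a bounded complex $P$ of finitely generated projective modules, say with $P^{n}=0$ for $n\notin[a,b]$, and represent an arbitrary $X\in\mathbf{D}^{b}(A\mbox{-mod})$ by a bounded complex of finitely generated modules concentrated in degrees $[c,d]$. Since a bounded complex of projectives is homotopy-projective, one has ${\rm Hom}_{\mathbf{D}^{b}(A\mbox{-mod})}(Y,\Sigma^{i}(X))\simeq{\rm Hom}_{\mathbf{K}(A\mbox{-Mod})}(P,\Sigma^{i}(X))$, and this group vanishes as soon as $i>d-a$: then the nonzero components of $P$ and of $\Sigma^{i}(X)$ lie in disjoint ranges of degrees, so there is no nonzero chain map between the two complexes at all.

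For the ``if'' part I would take a minimal projective resolution $P\to Y$ with each $P^{n}$ finitely generated projective; since $H^{n}(Y)=0$ for $n\gg0$ one may arrange $P^{n}=0$ for $n\gg0$, so $P$ is a bounded-above, hence homotopy-projective, complex of projectives. The crux will be the following consequence of minimality: for every $i\in\mathbb{Z}$ there is a natural isomorphism
$${\rm Hom}_{\mathbf{K}(A\mbox{-Mod})}(P,\Sigma^{i}(A_{0}))\simeq{\rm Hom}_{A}(P^{-i},A_{0}).$$
Here a degreewise map into the stalk complex $\Sigma^{i}(A_{0})$ is the same as an $A$-linear map $P^{-i}\to A_{0}$; it is automatically a chain map because ${\rm rad}(A)$ annihilates $A_{0}$ while $d_{P}^{-i-1}(P^{-i-1})\subseteq{\rm rad}(A)P^{-i}$, and any null-homotopy of it factors through $d_{P}^{-i}(P^{-i})\subseteq{\rm rad}(A)P^{-i+1}$ and hence vanishes. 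As $P^{-i}$ is finitely generated projective, ${\rm Hom}_{A}(P^{-i},A_{0})\simeq{\rm Hom}_{A}(P^{-i}/{\rm rad}(A)P^{-i},A_{0})$ is nonzero whenever $P^{-i}\neq0$, by Nakayama's lemma. On the other hand, homotopy-projectivity of $P$ identifies ${\rm Hom}_{\mathbf{K}(A\mbox{-Mod})}(P,\Sigma^{i}(A_{0}))$ with ${\rm Hom}_{\mathbf{D}^{b}(A\mbox{-mod})}(Y,\Sigma^{i}(A_{0}))$, which is zero for $i\gg0$ by the hypothesis applied to $X=A_{0}$. Combining these, $P^{-i}=0$ for $i\gg0$, so $P$ is a bounded complex of finitely generated projectives and $Y\simeq P$ lies in $\mathbf{K}^{b}(A\mbox{-proj})$.

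The main thing to pin down is the representation-theoretic input: every object of $\mathbf{D}^{b}(A\mbox{-mod})$ admits a projective resolution by finitely generated projectives that is bounded above and minimal (equivalently, one may split a contractible direct summand off an arbitrary such resolution), and it is precisely this minimality that makes the comparison with $A_{0}$ detect each nonzero component of $P$. Everything else is a routine degree count together with the standard identification of homotopy classes of maps out of a bounded-above complex of projectives with the corresponding morphisms in the derived category.
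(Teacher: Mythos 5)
Your proof is correct: the degree count gives the ``only if'' direction, and the minimal-resolution argument with the test object $A_0$ (using $d_P(P)\subseteq{\rm rad}(A)P$ to identify ${\rm Hom}_{\mathbf{K}}(P,\Sigma^i(A_0))$ with ${\rm Hom}_A(P^{-i},A_0)$ and Nakayama to detect nonzero components) is exactly the standard argument for this characterization of perfect complexes. The paper states the lemma without proof as a standard fact, so there is nothing further to compare.
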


The main result of this section is as follows, which establishes the promised duality between $\mathbf{K}^b(A\mbox{-proj})$ and $\mathbf{D}^b(A\mbox{-mod})$. It is analogous to \cite[Theorem 3.2 and Proposition 3.12]{Ball}.

\begin{thm}\label{thm:1}
Let $A$ be a finite dimensional $k$-algebra. Then we have equivalences of categories
$$\mathbf{D}^b(A\mbox{-{\rm mod}})\stackrel{\sim}\longrightarrow {\rm coho}(\mathbf{K}^b(A\mbox{-}{\rm proj})^{\rm op}), \quad X\mapsto {\rm Hom}_{\mathbf{D}^b(A\mbox{-}{\rm mod})}(-, X)|_{\mathbf{K}^b(A\mbox{-}{\rm proj})}$$
and
$$\mathbf{K}^b(A\mbox{-}{\rm proj}) \stackrel{\sim}\longrightarrow {\rm coho}(\mathbf{D}^b(A\mbox{-{\rm mod}})), \quad P\mapsto {\rm Hom}_{\mathbf{D}^b(A\mbox{-}{\rm mod})}(P, -).$$
\end{thm}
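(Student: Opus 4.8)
The plan is to prove the two equivalences in parallel, using Corollary \ref{cor:coho} as the main engine after realizing the relevant bounded categories as the compact objects in suitable compactly generated triangulated categories. For the first equivalence, set $\mathcal{T}=\mathbf{D}(A\mbox{-Mod})$, which is compactly generated with $\mathcal{T}^c=\mathbf{K}^b(A\mbox{-proj})$ (up to the standard identification). By Corollary \ref{cor:coho}, the restricted Yoneda functor induces an equivalence $\mathcal{T}_{\rm lf}/\mathbf{ph}\xrightarrow{\sim}{\rm coho}((\mathbf{K}^b(A\mbox{-proj}))^{\rm op})$, so it suffices to identify $\mathbf{D}^b(A\mbox{-mod})$ with $\mathcal{T}_{\rm lf}/\mathbf{ph}$ inside $\mathbf{D}(A\mbox{-Mod})$. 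Concretely, I would first show that $\mathbf{D}^b(A\mbox{-mod})\subseteq \mathcal{T}_{\rm lf}$: for $X\in\mathbf{D}^b(A\mbox{-mod})$ and $P\in\mathbf{K}^b(A\mbox{-proj})$ compact, $\bigoplus_{n}{\rm Hom}_{\mathbf{D}(A\mbox{-Mod})}(P,\Sigma^nX)$ is finite dimensional since both $P$ and $X$ are bounded with finite-dimensional cohomology; and the phantom ideal restricted to $\mathbf{D}^b(A\mbox{-mod})$ vanishes because a morphism $X\to Y$ in $\mathbf{D}^b(A\mbox{-mod})$ that kills all maps from complexes in $\mathbf{K}^b(A\mbox{-proj})$ must be zero — one can test against projective resolutions of the modules appearing in $X$, or invoke that $\mathbf{K}^b(A\mbox{-proj})$ generates $\mathbf{D}^b(A\mbox{-mod})$ as a thick subcategory in the appropriate sense. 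So $\mathbf{D}^b(A\mbox{-mod})$ embeds fully faithfully into $\mathcal{T}_{\rm lf}/\mathbf{ph}$.

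The crux is the reverse inclusion: every locally-finite object of $\mathbf{D}(A\mbox{-Mod})$ is, modulo phantoms, isomorphic to an object of $\mathbf{D}^b(A\mbox{-mod})$. For this I would replace an arbitrary complex by its K-injective resolution, i.e.\ work with a complex $X$ of injective $A$-modules. The local finiteness of $X$ says that $\bigoplus_n{\rm Hom}_{\mathbf{K}(A\mbox{-Mod})}(P,\Sigma^nX)$ is finite dimensional for each $P\in\mathbf{K}^b(A\mbox{-proj})$; taking $P$ to be a projective resolution of $A_0=A/{\rm rad}(A)$ and using \eqref{iso:inj1}–\eqref{iso:inj2} to compute ${\rm Hom}_{\mathbf{K}(A\mbox{-Mod})}(\mathbf{i}(A_0),\Sigma^nX)$, we learn that this group vanishes for $|n|\gg 0$. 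Corollary \ref{cor:trunca} then tells us $X$ is homotopy equivalent to the good truncation $\tau_{<n_0}\tau_{>-n_0}(X)$, which is a bounded complex of injectives; its cohomologies can further be shown to be finite-dimensional by testing ${\rm Hom}(P,X)$ against finitely generated projectives $P$. Hence $X$ lies in $\mathbf{D}^b(A\mbox{-mod})$ up to isomorphism in $\mathbf{D}(A\mbox{-Mod})$, and a fortiori up to phantoms. Combined with the previous paragraph, this yields the first equivalence; one checks along the way that the functor in the statement is exactly the restricted Yoneda functor transported along these identifications.

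For the second equivalence, the symmetric strategy is to put $\mathcal{T}'=\mathbf{K}(A\mbox{-Proj})$ (or the homotopy category of all projectives), whose compact objects are $\mathbf{K}^b(A\mbox{-proj})$, but here the natural ambient category pairing with $\mathbf{D}^b(A\mbox{-mod})$ is different: I would instead use that $\mathbf{D}^b(A\mbox{-mod})=\mathcal{S}^c$ where $\mathcal{S}$ is the derived category of the opposite-direction setup, or more directly invoke \cite[Lemma 2.1]{Kr}-type results identifying ${\rm coho}(\mathbf{D}^b(A\mbox{-mod}))$ with the homotopy category of complexes of injectives that are locally finite, and then apply Lemma \ref{lem:split} and Corollary \ref{cor:trunca} to cut such complexes down to $\mathbf{K}^b(A\mbox{-proj})$ via the duality $\mathbf{K}^b(A\mbox{-proj})^{\rm op}\simeq \mathbf{K}^b(A^{\rm op}\mbox{-proj})$. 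Alternatively, and more cleanly, the second equivalence follows from the first by a Hom-duality / biduality argument: the first equivalence identifies $\mathbf{D}^b(A\mbox{-mod})$ with ${\rm coho}(\mathbf{K}^b(A\mbox{-proj})^{\rm op})$, and applying Lemma \ref{lem:perf} together with the representability Lemma \ref{lem:rep} in the form that a cohomological functor on $\mathbf{D}^b(A\mbox{-mod})$ landing in $k\mbox{-mod}$ is represented by an object of $\mathbf{D}^b(A\mbox{-mod})$ which moreover lies in $\mathbf{K}^b(A\mbox{-proj})$ precisely when the functor has the vanishing property of Lemma \ref{lem:perf} — this characterizes which representing objects are perfect and matches the local-finiteness constraint built into ${\rm coho}$. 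The main obstacle, I expect, will be the bookkeeping in the first equivalence: verifying rigorously that $\mathbf{D}^b(A\mbox{-mod})$ is exactly the phantom-free part of $\mathbf{D}(A\mbox{-Mod})_{\rm lf}$, in particular the finite-dimensionality of the cohomologies of a locally-finite complex of injectives, and making sure the truncation from Corollary \ref{cor:trunca} is compatible with the phantom quotient; the second equivalence should then be largely formal once the first is in hand, modulo the opposite-algebra duality.
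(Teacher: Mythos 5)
Your treatment of the first equivalence is essentially the paper's argument ($\mathcal{T}=\mathbf{D}(A\mbox{-Mod})$, identify $\mathcal{T}_{\rm lf}$ with $\mathbf{D}^b(A\mbox{-mod})$, kill phantoms, apply Corollary \ref{cor:coho}), though two points need repair. First, identifying $\mathcal{T}_{\rm lf}$ is much easier than you make it: since $\mathbf{K}^b(A\mbox{-proj})$ is generated by $A$ and ${\rm Hom}_{\mathcal{T}}(A,\Sigma^nX)\simeq H^n(X)$, local finiteness is literally the statement that the total cohomology is finite dimensional; no injective resolutions or Corollary \ref{cor:trunca} are needed here. Second, and more seriously, your phantom-vanishing step is not a proof: the assertion that $\mathbf{K}^b(A\mbox{-proj})$ generates $\mathbf{D}^b(A\mbox{-mod})$ as a thick subcategory is false whenever $A$ has infinite global dimension, and ``testing against projective resolutions of the modules appearing in $X$'' does not by itself show $f=0$. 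The correct argument replaces $X$ by a bounded-above complex of projectives, notes that the phantom property forces $f|_{\sigma_{\geq n}(X)}$ to be null-homotopic for every $n$ (each brutal truncation $\sigma_{\geq n}(X)$ is perfect), and then invokes Lemma \ref{lem:ht1} to conclude that $f$ itself is null-homotopic; that lemma is exactly the missing ingredient.

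The second equivalence is where your proposal has a genuine gap. Your first suggestion, $\mathcal{T}'=\mathbf{K}(A\mbox{-Proj})$ with $(\mathcal{T}')^c=\mathbf{K}^b(A\mbox{-proj})$, is based on a false premise: the compact objects of the homotopy category of all projectives are \emph{not} the bounded complexes of finitely generated projectives. Your ``cleaner'' biduality alternative also contains a false intermediate claim, namely that a locally-finite cohomological functor on $\mathbf{D}^b(A\mbox{-mod})$ is represented by an object of $\mathbf{D}^b(A\mbox{-mod})$; Lemma \ref{lem:rep} gives a representing object only in an ambient compactly generated category whose compacts are $\mathbf{D}^b(A\mbox{-mod})^{\rm op}$, and the entire content of the second equivalence is to show that local finiteness forces this representing object down into something corresponding to $\mathbf{K}^b(A\mbox{-proj})$. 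The missing precise input is Krause's theorem that $\mathbf{K}(A^{\rm op}\mbox{-Inj})$ is compactly generated with compacts $\mathbf{D}^b(A^{\rm op}\mbox{-mod})$; one then shows via Lemma \ref{lem:split}, Corollary \ref{cor:trunca} and (\ref{iso:inj2}) that its locally-finite objects are exactly $\mathbf{K}^b(A^{\rm op}\mbox{-inj})$ (here the phantom ideal vanishes because these are already compact), and transports the result through the duality $D={\rm Hom}_k(-,k)$, which identifies $\mathbf{K}^b(A\mbox{-proj})$ with $\mathbf{K}^b(A^{\rm op}\mbox{-inj})^{\rm op}$ and $\mathbf{D}^b(A\mbox{-mod})$ with $\mathbf{D}^b(A^{\rm op}\mbox{-mod})^{\rm op}$. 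Note that the duality you propose, $\mathbf{K}^b(A\mbox{-proj})^{\rm op}\simeq \mathbf{K}^b(A^{\rm op}\mbox{-proj})$ via ${\rm Hom}_A(-,A)$, does not extend to $\mathbf{D}^b(A\mbox{-mod})$ and therefore cannot carry the functor category ${\rm coho}(\mathbf{D}^b(A\mbox{-mod}))$ along; you must use the $k$-linear duality instead.
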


\begin{proof}
For the first equivalence, we set $\mathcal{T}=\mathbf{D}(A\mbox{-Mod})$. It is well known that $\mathcal{T}$ is compactly generated and that there is a natural identification between $\mathcal{T}^c$ and $\mathbf{K}^b(A\mbox{-}{\rm proj})$.  Since $\mathbf{K}^b(A\mbox{-}{\rm proj})$ is generated by $A$, an object $X\in \mathcal{T}$ is locally-finite if and only if $\bigoplus_{n\in \mathbb{Z}}{\rm Hom}_\mathcal{T}(A, \Sigma^n(X))$ is finite dimensional. We recall  the canonical isomorphism
$${\rm Hom}_\mathcal{T}(A, \Sigma^n(X))\stackrel{\sim}\longrightarrow H^n(X).$$
It follows that a complex $X\in \mathcal{T}$ is locally-finite if and only if the total cohomogical space $\bigoplus_{n\in \mathbb{Z}}H^n(X)$ is finite dimensional, in other words, $X$ lies in $\mathbf{D}^b(A\mbox{-{\rm mod}})$. Hence, we identify $\mathcal{T}_{\rm lf}$ with $\mathbf{D}^b(A\mbox{-{\rm mod}})$.

We observe that there is no non-zero phantom morphism $f\colon X\rightarrow Y$ in $\mathbf{D}^b(A\mbox{-{\rm mod}})$. Indeed, we may assume that $X$ is a bounded-above complex of projective modules and that $f$ is a chain map. The phantom property implies that $f|_{\sigma_{\geq n}(X)}$ is homotopic to zero for any  integer $n$. By Lemma \ref{lem:ht1}, we infer that $f$ is homotopic to zero. By combining these facts, the first equivalence follows from Corollary \ref{cor:coho}.

For the second equivalence, let $A^{\rm op}$ be the opposite algebra of $A$. We consider $\mathcal{T}'=\mathbf{K}(A^{\rm op}\mbox{-Inj})$, the homotopy category of  injective $A^{\rm op}$-modules. By \cite[Proposition 2.3]{Kr}, $\mathcal{T}'$ is compactly generated and there is a natural identification between ${\mathcal{T}'}^c$ and $\mathbf{D}^b(A^{\rm op}\mbox{-mod})$. Recall that we identify a complex $Y$ in $\mathbf{D}^b(A^{\rm op}\mbox{-mod})$ with its injective resolution $\mathbf{i}(Y)$ in $\mathcal{T}'$.

We claim that an object $I$ in $\mathcal{T}'$ is locally-finite if and only if it lies in $\mathbf{K}^b(A^{\rm op}\mbox{-inj})$, the bounded homotopy category of finitely generated injective $A^{\rm op}$-modules. The ``if" part is clear. Conversely, we assume that $I$ is locally-finite. Then there is some $n_0>0$ such that  ${\rm Hom}_{\mathbf{K}(A^{\rm op}\mbox{-}{\rm Mod})}(\mathbf{i}(A_0), \Sigma^n(X))=0$ whenever $|n|\geq n_0$. By Corollary \ref{cor:trunca}, we may assume that $I$ is a bounded complex of injective $A$-modules. By (\ref{iso:inj2}) we infer that the total cohomology space $\bigoplus_{n\in \mathbb{Z}} H^n(I)$ is finite dimensional. It follows that the bounded complex $I$ is an injective resolution of a bounded complex of finitely generated $A$-modules. In other words, we have that up to isomorphism,  $I$ lies in $\mathbf{K}^b(A^{\rm op}\mbox{-inj})$. This proves the claim.

We now apply Corollary \ref{cor:coho} to $\mathcal{T}'$. We identify ${\mathcal{T}'}^c$ with $\mathbf{D}^b(A^{\rm op}\mbox{-mod})$,  and $\mathcal{T}'_{\rm lf}$ with $\mathbf{K}^b(A^{\rm op}\mbox{-inj})$. Since $\mathcal{T}'_{\rm lf}\subseteq {\mathcal{T}'}^c$, the phantom ideal vanishes. Hence, we have an equivalence
$$\mathbf{K}^b(A^{\rm op}\mbox{-inj})\stackrel{\sim}\longrightarrow {\rm coho}(\mathbf{D}^b(A^{\rm op}\mbox{-mod})^{\rm op}), \quad I\mapsto {\rm Hom}_{\mathbf{D}^b(A^{\rm op}\mbox{-}{\rm mod})}(-, I).$$
Using the duality functor $D={\rm Hom}_k(-, k)$ on modules, we identify $\mathbf{K}^b(A\mbox{-proj})$ with $\mathbf{K}^b(A^{\rm op}\mbox{-inj})^{\rm op}$, and $\mathbf{D}^b(A\mbox{-mod})$ with $\mathbf{D}^b(A^{\rm op}\mbox{-mod})^{\rm op}$. Then the required equivalence follows immediately.
\end{proof}

\section{Pseudo-adjunctions and triangle autoequivalences}

In this section, we apply Theorem \ref{thm:1} to obtain a $2$-categorical duality between two strict $2$-categories involving the bounded homotopy categories of projective modules and the bounded derived categories of module categories, respectively. Since the triangulated structures are not properly captured in the equivalences in Theorem \ref{thm:1}, we use the pseudo-adjunctions in \cite{Ball} to obtain the required assignment between triangle functors.

Throughout this section,  $A$ and $B$ will be two finite dimensional $k$-algebras.

\subsection{Pseudo-adjunctions and a $2$-categorical duality}

Let $\mathcal{T}$ and $\mathcal{T}'$ be triangulated categories, with translation functors $\Sigma$ and $\Sigma'$, respectively. Recall that a triangle functor $(F, \omega)\colon \mathcal{T}\rightarrow \mathcal{T}'$ consists of an additive functor $F\colon \mathcal{T}\rightarrow \mathcal{T}'$ and a natural isomorphism $\omega\colon F\Sigma\rightarrow \Sigma' F$, called the \emph{connecting isomorphism} of $F$, such that it respects exact triangles; more precisely, any exact triangle $X\stackrel{f}\rightarrow Y \stackrel{g}\rightarrow Z \stackrel{h}\rightarrow \Sigma(X)$ in $\mathcal{T}$ is sent to an exact triangle $F(X)\stackrel{F(f)}\rightarrow F(Y) \stackrel{F(g)}\rightarrow F(Z) \xrightarrow{\omega_X\circ F(h)} \Sigma'F(X)$ in $\mathcal{T}'$. We will later suppress $\omega$ and denote $(F, \omega)$ simply by $F$. We emphasize that natural transformations between triangle functors are required to respect the connecting isomorphisms.

Let $F=(F, \omega)\colon \mathbf{K}^b(A\mbox{-proj})\rightarrow \mathbf{K}^b(B\mbox{-proj})$ be a triangle functor. For each complex $X\in \mathbf{D}^b(B\mbox{-mod})$, the following cohomological functor
$${\rm Hom}_{\mathbf{D}^b(B\mbox{-}{\rm mod})}(F(-), X)\colon \mathbf{K}^b(A\mbox{-proj})^{\rm op}\longrightarrow k\mbox{-mod}$$
is locally-finite. By Theorem \ref{thm:1}, there is a unique complex $F^\vee(X)\in \mathbf{D}^b(A\mbox{-mod})$ with a natural isomorphism
$${\rm Hom}_{\mathbf{D}^b(B\mbox{-}{\rm mod})}(F(-), X)\stackrel{\sim}\longrightarrow {\rm Hom}_{\mathbf{D}^b(A\mbox{-}{\rm mod})}(-, F^\vee(X))|_{\mathbf{K}^b(A\mbox{-}{\rm proj})}.$$
Moreover, this defines a $k$-linear functor $F^\vee\colon \mathbf{D}^b(B\mbox{-mod})\rightarrow \mathbf{D}^b(A\mbox{-mod})$ such that there is a $k$-linear bifunctorial isomorphism
$$\Phi_{P, X}\colon {\rm Hom}_{\mathbf{D}^b(B\mbox{-}{\rm mod})}(F(P), X)\stackrel{\sim}\longrightarrow {\rm Hom}_{\mathbf{D}^b(A\mbox{-}{\rm mod})}(P, F^\vee(X))$$
for all $P\in \mathbf{K}^b(A\mbox{-proj})$ and $X\in \mathbf{D}^b(B\mbox{-mod})$. The connecting isomorphism $\omega$ yields a natural isomorphism $\omega^\vee\colon F^\vee \Sigma\rightarrow \Sigma F^\vee$ by the following commutative diagram,
\[\xymatrix{
(\Sigma^{-1}F(P), X) \ar[d]^-{(\Sigma^{-1}\omega_{\Sigma^{-1}(P)}, X)}\ar[rr]^-{\Sigma} && (F(P), \Sigma(X)) \ar[rr]^-{\Phi_{P, \Sigma(X)}} && (P, F^\vee \Sigma(X))\ar[d]^-{(P, {\omega^\vee_X})}\\
(F\Sigma^{-1}(P), X) \ar[rr]^-{\Phi_{\Sigma^{-1}(P), X}} && (\Sigma^{-1}(P), F^\vee(X)) \ar[rr]^-{\Sigma} && (P, \Sigma F^\vee(X)),
}\]
where we omit the notation Hom in the Hom spaces.

\begin{lem}
Keep the notation as above. Then $F^\vee=(F^\vee, \omega^\vee)\colon \mathbf{D}^b(B\mbox{-}{\rm mod})\rightarrow \mathbf{D}^b(A\mbox{-}{\rm mod})$ is a triangle functor.
\end{lem}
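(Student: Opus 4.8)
The plan is to verify that the pair $(F^\vee, \omega^\vee)$ satisfies the defining property of a triangle functor, namely that it carries exact triangles to exact triangles with the connecting isomorphism $\omega^\vee$ inserted appropriately. Since $\mathbf{D}^b(A\mbox{-mod})$ and $\mathbf{D}^b(B\mbox{-mod})$ have enough compact objects and the equivalences of Theorem \ref{thm:1} are implemented by the restricted Yoneda functors, the natural strategy is to test everything against the compact objects $P\in \mathbf{K}^b(A\mbox{-proj})$, using that a triangle in $\mathbf{D}^b(A\mbox{-mod})$ is exact if and only if ${\rm Hom}_{\mathbf{D}^b(A\mbox{-mod})}(P, -)$ takes it to a long exact sequence for all such $P$ (this is precisely the representability content of Theorem \ref{thm:1}, or equivalently Corollary \ref{cor:coho} applied to $\mathcal{T}=\mathbf{D}(A\mbox{-Mod})$).

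First I would fix an exact triangle $X\stackrel{u}\rightarrow Y \stackrel{v}\rightarrow Z \stackrel{w}\rightarrow \Sigma(X)$ in $\mathbf{D}^b(B\mbox{-mod})$ and form the candidate triangle
$$F^\vee(X)\xrightarrow{F^\vee(u)} F^\vee(Y) \xrightarrow{F^\vee(v)} F^\vee(Z) \xrightarrow{\omega^\vee_X\circ F^\vee(w)} \Sigma F^\vee(X).$$
Complete $F^\vee(u)$ to an actual exact triangle $F^\vee(X)\xrightarrow{F^\vee(u)} F^\vee(Y) \to C \to \Sigma F^\vee(X)$ in $\mathbf{D}^b(A\mbox{-mod})$; it then suffices to produce an isomorphism $C\cong F^\vee(Z)$ compatible with the maps from $F^\vee(Y)$ and to $\Sigma F^\vee(X)$. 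Apply ${\rm Hom}_{\mathbf{D}^b(A\mbox{-mod})}(P, -)$ for $P\in \mathbf{K}^b(A\mbox{-proj})$: the long exact sequence associated to the completed triangle, together with the bifunctorial isomorphism $\Phi_{P,-}$, identifies it with the long exact sequence obtained by applying ${\rm Hom}_{\mathbf{D}^b(B\mbox{-mod})}(F(P), -)$ to the original triangle $X\to Y\to Z\to \Sigma X$ — here one uses that $F$ is a triangle functor (so $F(P)$-cohomology of an exact triangle is long exact) and that $\Phi$ intertwines the connecting map built from $w$ with the one built from $\omega^\vee_X\circ F^\vee(w)$, which is exactly the commutativity of the square defining $\omega^\vee$. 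By the five lemma, ${\rm Hom}_{\mathbf{D}^b(A\mbox{-mod})}(P, C)\cong {\rm Hom}_{\mathbf{D}^b(A\mbox{-mod})}(P, F^\vee(Z))$ naturally in $P$, and by the fullness-and-faithfulness part of Theorem \ref{thm:1} (the restricted Yoneda functor is an equivalence onto ${\rm coho}$) this isomorphism of functors is induced by a unique isomorphism $C\to F^\vee(Z)$ in $\mathbf{D}^b(A\mbox{-mod})$. One then checks the two compatibility conditions by evaluating against $P$ again and invoking faithfulness.

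The main obstacle I expect is bookkeeping: verifying that the connecting morphism $\omega^\vee_X\circ F^\vee(w)$ of the candidate triangle really corresponds, under $\Phi$, to the connecting morphism of the $F(P)$-image of the original triangle (which involves $\omega_{?}$ from $F$ being a triangle functor). This is precisely what the commutative diagram defining $\omega^\vee$ is designed to encode, but one must chase the signs and the interplay of $\Sigma$, $\Sigma^{-1}$, $\omega$, and $\Phi$ carefully through that square — there is genuine risk of an off-by-a-shift error. A secondary, lighter point is that one should confirm $\omega^\vee$ as defined is actually natural in $X$ (it is, being a composite of natural isomorphisms), so that $(F^\vee,\omega^\vee)$ is a bona fide candidate before the triangle-exactness check even begins. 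Everything else — the long exact sequences, the five lemma, the passage from natural isomorphisms of representable-type functors to isomorphisms of objects — is routine given Theorem \ref{thm:1} and Corollary \ref{cor:coho}.
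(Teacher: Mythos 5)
Your strategy (complete $F^\vee(u)$ to a genuine triangle with cone $C$, compare $C$ with $F^\vee(Z)$ by testing against objects of $\mathbf{K}^b(A\mbox{-proj})$ via $\Phi$, and invoke the five lemma) is the right skeleton, but it has a genuine gap at the decisive step, and the difficulty is not the ``bookkeeping'' you anticipate. To apply the five lemma you must first exhibit a commutative ladder between the two long exact sequences, i.e.\ a morphism $\gamma\colon C\to F^\vee(Z)$ (equivalently, by Theorem \ref{thm:1}, a natural transformation ${\rm Hom}(-,C)|_{\mathbf{K}^b(A\mbox{-}{\rm proj})}\to {\rm Hom}(F(-),Z)$) compatible with \emph{both} the maps out of $F^\vee(Y)$ \emph{and} the maps into $\Sigma F^\vee(X)$. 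The weak-cokernel property of the cone only yields a $\gamma$ with $\gamma\circ\alpha=F^\vee(v)$ (where $\alpha\colon F^\vee(Y)\to C$); the discrepancy $d=\omega^\vee_X F^\vee(w)\circ\gamma-\beta$ satisfies $d\circ\alpha=0$ and hence factors through $\beta$, but there is no reason for it to vanish, and without the second commuting square the five lemma cannot be invoked. For an honest adjoint pair one repairs this by applying the left adjoint to the completed triangle and using (TR3) together with the counit; here $F$ is defined only on perfect complexes and $C$ need not be perfect, so that route is closed. Relatedly, your opening assertion that a candidate triangle in $\mathbf{D}^b(A\mbox{-mod})$ is exact as soon as ${\rm Hom}(P,-)$ carries it to a long exact sequence for every $P\in\mathbf{K}^b(A\mbox{-proj})$ is not a consequence of Theorem \ref{thm:1} and is false in general: Hom-exact candidate triangles need not be distinguished.

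This construction of a comparison morphism compatible on both sides is precisely the nontrivial content of \cite[Lemma 4.11]{Ball}, which the paper cites rather than reproves (remarking only that locally-free resolutions of complexes of sheaves should be replaced by projective resolutions of complexes of modules); the resolutions are used exactly to realize the relevant morphisms at the chain level, where the mapping cone has an explicit universal property and the required compatibilities can be arranged. Your secondary points --- naturality of $\omega^\vee$, the identification via $\Phi$ of the $P$-local long exact sequence of the candidate triangle with the $F(P)$-local long exact sequence of the original one, and the passage from a natural isomorphism of restricted Hom functors to an isomorphism of objects --- are all fine. But as written the proposal assumes the isomorphism of functors before the ladder that would produce it exists, so it does not yet constitute a proof.
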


Following \cite[Section 4]{Ball}, we call $F^\vee$ the \emph{right pseudo-adjoint} of $F$.

\begin{proof}
This is due to \cite[Lemma 4.11]{Ball}, where  we replace the locally-free resolutions of complexes of sheaves in the proof by the projective resolutions of complexes of modules.
\end{proof}

Conversely, for a triangle functor $G\colon \mathbf{D}^b(B\mbox{-}{\rm mod})\rightarrow \mathbf{D}^b(A\mbox{-}{\rm mod})$ and a complex $P\in \mathbf{K}^b(A\mbox{-proj})$, the following cohomological functor
$${\rm Hom}_{\mathbf{D}^b(A\mbox{-}{\rm mod})}(P, G(-))\colon \mathbf{D}^b(B\mbox{-}{\rm mod})\longrightarrow k\mbox{-mod}$$
is locally-finite. By Theorem \ref{thm:1} and a similar argument as above, we obtain a $k$-linear functor ${^\vee G}\colon \mathbf{K}^b(A\mbox{-proj})\rightarrow \mathbf{K}^b(B\mbox{-proj})$ and a bifunctorial isomorphism
$$\Psi_{P, X}\colon {\rm Hom}_{\mathbf{D}^b(A\mbox{-}{\rm mod})}(P, G(X))\stackrel{\sim}\longrightarrow {\rm Hom}_{\mathbf{D}^b(A\mbox{-}{\rm mod)}}({^\vee G}(P), X)$$
for all $P\in \mathbf{K}^b(A\mbox{-proj})$ and $X\in \mathbf{D}^b(B\mbox{-mod})$. Moreover, by \cite[Lemma 4.13]{Ball}, the functor ${^\vee G}$ is a triangle functor, called the \emph{left pseudo-adjoint} of $G$. We call the above isomorphisms $\Phi$ and $\Psi$ \emph{pseudo-adjunctions}.

We denote by $\mathbb{K}^b$ the strict $2$-category,  whose objects are all the finite dimensional $k$-algebras $A$ such that $1$-morphisms are triangle functors between their bounded homotopy categories $\mathbf{K}^b(A\mbox{-proj})$ of projective modules and that $2$-morphisms are natural transformations between these triangle functors. Similarly, we have the strict $2$-category $\mathbb{D}^b$ by replacing $\mathbf{K}^b(A\mbox{-proj})$ with $\mathbf{D}^b(A\mbox{-mod})$. Denote by $(\mathbb{D}^b)^{\rm coop}$ the \emph{bidual} of $\mathbb{D}^b$, where both the $1$-morphisms and $2$-morphisms are reversed.

The analogue of the following result for projective schemes is essentially proved in \cite[Section 4]{Ball}.

\begin{thm} \label{thm:2} The assignment $F\mapsto F^\vee$ gives rise to a $2$-equivalence
$$\mathbb{K}^b\stackrel{\sim}\longrightarrow (\mathbb{D}^b)^{\rm coop},$$
which acts on objects by the identity and whose inverse is given by the assignment $G\mapsto {^\vee G}$.
\end{thm}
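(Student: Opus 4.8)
The plan is to establish the $2$-equivalence by verifying, in order, that the assignments $F\mapsto F^\vee$ and $G\mapsto {}^\vee G$ are (i) functorial on each Hom-category, (ii) mutually quasi-inverse up to coherent isomorphism, and (iii) compatible with horizontal composition of $1$-morphisms. The key mechanism throughout is the pseudo-adjunction: the bifunctorial isomorphisms $\Phi$ and $\Psi$ turn every diagram chase into an application of the Yoneda lemma, exactly as in \cite[Section 4]{Ball}. The only genuinely new input is Theorem \ref{thm:1}, which plays the role of the representability results used by Ballard for schemes; everything else is formal $2$-categorical bookkeeping.

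First I would make the action on $2$-morphisms precise. Given triangle functors $F_1,F_2\colon \mathbf{K}^b(A\mbox{-proj})\rightarrow \mathbf{K}^b(B\mbox{-proj})$ and a natural transformation $\eta\colon F_1\rightarrow F_2$ respecting connecting isomorphisms, precomposition with $\eta$ gives a natural transformation ${\rm Hom}(F_2(-),X)\rightarrow {\rm Hom}(F_1(-),X)$ of locally-finite cohomological functors; via the defining isomorphisms of $F_1^\vee$ and $F_2^\vee$ and Theorem \ref{thm:1} (specifically the fullness and faithfulness encoded in the equivalence, cf.\ Lemma \ref{lem:rep}) this is induced by a unique morphism $F_2^\vee(X)\rightarrow F_1^\vee(X)$, natural in $X$; one checks it respects $\omega_1^\vee,\omega_2^\vee$ using the commuting square that defined these, so we obtain $\eta^\vee\colon F_2^\vee\rightarrow F_1^\vee$. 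Functoriality $(\eta'\circ\eta)^\vee=\eta^\vee\circ(\eta')^\vee$ and $({\rm id})^\vee={\rm id}$, together with the contravariance, are immediate from uniqueness in Theorem \ref{thm:1}; this is precisely why the target must be $(\mathbb{D}^b)^{\rm coop}$ rather than $\mathbb{D}^b$. The same applies verbatim to ${}^\vee(-)$.

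Next I would show $F\mapsto F^\vee$ and $G\mapsto {}^\vee G$ are quasi-inverse. For a triangle functor $F\colon \mathbf{K}^b(A\mbox{-proj})\rightarrow \mathbf{K}^b(B\mbox{-proj})$, composing the pseudo-adjunctions gives, for $P\in\mathbf{K}^b(A\mbox{-proj})$ and $X\in\mathbf{D}^b(B\mbox{-mod})$, a chain of bifunctorial isomorphisms
\[
{\rm Hom}_{\mathbf{D}^b(B\mbox{-}{\rm mod})}({}^\vee(F^\vee)(P),X)\stackrel{\sim}\longrightarrow {\rm Hom}_{\mathbf{D}^b(A\mbox{-}{\rm mod})}(P,F^\vee(X))\stackrel{\sim}\longrightarrow {\rm Hom}_{\mathbf{D}^b(B\mbox{-}{\rm mod})}(F(P),X).
\]
Restricting $X$ to $\mathbf{K}^b(B\mbox{-proj})$ and invoking the Yoneda-type rigidity of Theorem \ref{thm:1} (the second equivalence there, identifying $\mathbf{K}^b(B\mbox{-proj})$ with a category of cohomological functors on $\mathbf{D}^b(B\mbox{-mod})$) yields a natural isomorphism ${}^\vee(F^\vee)\cong F$; one then checks it intertwines the connecting isomorphisms, using that $\omega^\vee$ and ${}^{\vee\vee}\!\omega$ were both built from $\omega$ via the same commuting squares. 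The reverse composite $({}^\vee G)^\vee\cong G$ is symmetric. These isomorphisms should moreover be checked to be $2$-natural in $F$ (resp.\ $G$), i.e.\ compatible with the action on $2$-morphisms, which again reduces to uniqueness.

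Finally I would treat horizontal composition: for $F\colon \mathbf{K}^b(A\mbox{-proj})\rightarrow\mathbf{K}^b(B\mbox{-proj})$ and $F'\colon \mathbf{K}^b(B\mbox{-proj})\rightarrow\mathbf{K}^b(C\mbox{-proj})$ one needs a natural isomorphism $(F'F)^\vee\cong F^\vee (F')^\vee$ respecting connecting isomorphisms, plus compatibility with identities and with the associator and unitors of the strict $2$-categories; this follows from the composite
\[
{\rm Hom}_{\mathbf{D}^b(C\mbox{-}{\rm mod})}(F'F(P),X)\stackrel{\sim}\longrightarrow {\rm Hom}_{\mathbf{D}^b(B\mbox{-}{\rm mod})}(F(P),(F')^\vee(X))\stackrel{\sim}\longrightarrow {\rm Hom}_{\mathbf{D}^b(A\mbox{-}{\rm mod})}(P,F^\vee(F')^\vee(X))
\]
and uniqueness of representing objects. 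I expect the main obstacle to be not any single step but the accumulation of coherence checks — verifying that every comparison isomorphism respects the connecting isomorphisms $\omega^\vee$ and is compatible with the structural $2$-cells — so the cleanest exposition is to import the scheme-theoretic verification of \cite[Section 4, proof of the $2$-equivalence]{Ball} wholesale, noting only that the representability input (Brown-type representability of locally-finite cohomological functors on perfect complexes) is replaced by Theorem \ref{thm:1}, and that locally-free resolutions of complexes of sheaves are replaced by projective resolutions of complexes of modules, exactly as already done in the proof that $F^\vee$ and ${}^\vee G$ are triangle functors.
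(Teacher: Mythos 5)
Your proposal is correct and follows essentially the same route as the paper: the pseudo-adjunctions $\Phi$ and $\Psi$ define the two (non-strict) $2$-functors, their composite ${\rm Hom}(F(P),X)\simeq {\rm Hom}(P,F^\vee(X))\simeq {\rm Hom}({}^\vee(F^\vee)(P),X)$ plus Yoneda gives $F\simeq{}^\vee(F^\vee)$ and symmetrically $G\simeq({}^\vee G)^\vee$, and the remaining coherence is routine/imported from Ballard. You merely spell out the $2$-morphism action, horizontal composition, and compatibility with connecting isomorphisms that the paper compresses into ``it is routine to verify.''
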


\begin{proof}
Using the pseudo-adjunctions, the assignment $F\mapsto F^\vee$ defines a (non-strict) $2$-functor $\mathbb{K}^b\longrightarrow (\mathbb{D}^b)^{\rm coop}$, whose action on objects is the identity. By  the following bifunctorial isomorphisms
$$(F(P), X)\stackrel{\Phi_{P, X}}\longrightarrow (P, F^\vee(X)) \stackrel{\Psi_{P, X}}\longrightarrow ({^\vee(F^\vee)}(P), X)$$
we obtain an isomorphism $F\rightarrow {^\vee(F^\vee)}$. Similarly, we obtain an isomorphism $G\rightarrow (^\vee G)^\vee$ for each $1$-morphism $G$ in $\mathbb{D}^b$. Then it is routine to verify that we have the required mutually inverse $2$-equivalences.
\end{proof}

\subsection{Extending functors and equivalences}

We will extract useful information from the $2$-equivalence in Theorem \ref{thm:2}. The treatment here is inspired by \cite[Lemmas 4.5 and 4.6]{Ball} with substantial difference.

\begin{lem}\label{lem:adj}
Let $F\colon \mathbf{K}^b(A\mbox{-}{\rm proj})\rightarrow \mathbf{K}^b(B\mbox{-}{\rm proj})$ and $F_1\colon \mathbf{K}^b(B\mbox{-}{\rm proj}) \rightarrow \mathbf{K}^b(A\mbox{-}{\rm proj})$ be two triangle functors. Then the following statements hold.
\begin{enumerate}
\item The pair $(F, F_1)$ is adjoint if and only if $(F^\vee, F_1^\vee)$ is adjoint.
\item The functor $F$ is an equivalence if and only if so is $F^\vee$.
\end{enumerate}
\end{lem}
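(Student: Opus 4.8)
The plan is to deduce both statements from the defining pseudo-adjunction isomorphisms $\Phi$ and $\Psi$ together with the $2$-equivalence of Theorem \ref{thm:2}, treating the whole argument as an exercise in transporting adjunctions across a duality. For part (1), I would first observe that a pair of triangle functors $(F,F_1)$ being adjoint amounts to the existence of a bifunctorial isomorphism ${\rm Hom}(F(P),Q)\simeq {\rm Hom}(P,F_1(Q))$ for $P\in\mathbf{K}^b(A\mbox{-proj})$ and $Q\in\mathbf{K}^b(B\mbox{-proj})$, and that under the $2$-equivalence $(-)^\vee$, which is full and faithful on Hom-categories and contravariant on $1$-morphisms, such a bifunctorial isomorphism corresponds to a bifunctorial isomorphism ${\rm Hom}(F_1^\vee(Y),X)\simeq {\rm Hom}(Y,F^\vee(X))$ (note the reversal of order caused by \texttt{coop}). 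The cleanest way to see this is: compose the pseudo-adjunction $\Phi$ for $F$, the hypothetical adjunction isomorphism, and $\Phi$ for $F_1$, to produce the desired bifunctorial isomorphism witnessing that $(F^\vee,F_1^\vee)$ is adjoint; the converse direction uses $\Psi$ and the natural isomorphisms $F\simeq{}^\vee(F^\vee)$, $F_1\simeq{}^\vee(F_1^\vee)$ established in the proof of Theorem \ref{thm:2}. Because $(-)^\vee$ is a $2$-equivalence, unit and counit $2$-morphisms transport along it, so one could also argue purely formally that a $2$-equivalence preserves and reflects adjunctions; I would state this as the conceptual reason and then spell out the explicit isomorphism for concreteness.

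For part (2), the key point is that an equivalence is exactly an adjoint pair whose unit and counit are isomorphisms, so it follows from part (1) once I check that invertibility of the unit/counit $2$-cells is preserved under $(-)^\vee$. Alternatively, and perhaps more transparently, I would argue directly: if $F$ is an equivalence with quasi-inverse $F_1$, then $(F,F_1)$ and $(F_1,F)$ are both adjoint, hence by (1) both $(F^\vee,F_1^\vee)$ and $(F_1^\vee,F^\vee)$ are adjoint, and moreover the composites $F^\vee F_1^\vee$ and $F_1^\vee F^\vee$ are isomorphic to $({\rm id})^\vee\simeq{\rm id}$ because $(-)^\vee$ is a (contravariant on $1$-cells) $2$-functor sending identities to identities up to the coherence isomorphisms; therefore $F^\vee$ is an equivalence. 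The converse uses the natural isomorphism $F\simeq{}^\vee(F^\vee)$ together with the symmetric statement for ${}^\vee(-)$: if $F^\vee$ is an equivalence, the same reasoning applied to ${}^\vee(-)$ shows ${}^\vee(F^\vee)$ is an equivalence, hence so is $F$.

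I expect the main obstacle to be purely bookkeeping: keeping track of the variance (the target is $(\mathbb{D}^b)^{\rm coop}$, so both the order of composition and the direction of $2$-morphisms flip) and verifying that the bifunctorial isomorphisms one writes down genuinely respect the connecting isomorphisms of the triangle functors, since natural transformations of triangle functors must be compatible with the $\omega$'s. The cleanest route is to avoid reproving coherence from scratch and instead invoke Theorem \ref{thm:2} as a black box: a $2$-equivalence of $2$-categories preserves and reflects all $2$-categorical properties, and "being an adjoint pair" and "being an equivalence" are such properties. So I would phrase the proof as: (i) recall that these notions are expressible in the internal language of a $2$-category; (ii) invoke that $2$-equivalences preserve and reflect them; (iii) identify, via the isomorphisms $F\simeq{}^\vee(F^\vee)$ and $G\simeq(^\vee G)^\vee$ from the proof of Theorem \ref{thm:2}, the functor $F^\vee$ as the image of $F$ and $F_1^\vee$ as the image of $F_1$ under the $2$-functor $(-)^\vee$. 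The only genuinely hands-on step, included for the reader's convenience, is writing the explicit adjunction isomorphism for $(F^\vee,F_1^\vee)$ as the pasting of $\Phi_{P,X}$ for $F$, the adjunction iso for $(F,F_1)$, and the inverse of $\Phi$ for $F_1$, and checking bifunctoriality, which is routine.
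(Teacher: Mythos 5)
Your proposal is correct and follows essentially the same route as the paper: the paper's entire proof is to recall that a $2$-equivalence preserves (and reflects) adjoint $1$-morphisms and internal equivalences, and then to invoke Theorem \ref{thm:2}, which is exactly the ``conceptual reason'' you single out as the cleanest formulation. The extra explicit manipulations with $\Phi$ and $\Psi$ you sketch are not needed (and, taken literally, only control Hom-spaces with a compact object in one slot), but since you correctly fall back on the formal $2$-categorical argument, the proof stands.
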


\begin{proof}
We recall that a $2$-equivalence preserves adjoint $1$-morphisms and internal equivalences. Then the results follow from Theorem \ref{thm:2}.
\end{proof}

Let $F\colon \mathbf{K}^b(A\mbox{-}{\rm proj})\rightarrow \mathbf{K}^b(B\mbox{-}{\rm proj})$ be a triangle functor. We say that a triangle functor $\tilde{F}\colon \mathbf{D}^b(A\mbox{-}{\rm mod})\rightarrow \mathbf{D}^b(B\mbox{-}{\rm mod})$ \emph{extends} $F$,  provided that $\tilde{F}(\mathbf{K}^b(A\mbox{-}{\rm proj}))\subseteq \mathbf{K}^b(B\mbox{-}{\rm proj})$ and that $F$ is isomorphic to the restriction $\tilde{F}|_{\mathbf{K}^b(A\mbox{-}{\rm proj})}$ as triangle functors.

We mention that the following is proved in \cite[Lemma 2.8]{AKLY} under the additional assumption that $F$ is given by the tensor product of a certain bounded complex of bimodules.

\begin{prop}\label{prop:extension}
Let $F\colon \mathbf{K}^b(A\mbox{-}{\rm proj})\rightarrow \mathbf{K}^b(B\mbox{-}{\rm proj})$ be a triangle functor. Then $F$ admits an extension  $\tilde{F}\colon \mathbf{D}^b(A\mbox{-}{\rm mod})\rightarrow \mathbf{D}^b(B\mbox{-}{\rm mod})$ if and only if $F$ has a left adjoint.

In this situation, the extension $\tilde{F}$ of $F$ is unique up to isomorphism, which necessarily has a right adjoint. Moreover, $F$ is an equivalence if and only if so is its extension $\tilde{F}$.
\end{prop}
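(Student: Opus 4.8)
The plan is to relate the existence of an extension of $F$ to the pseudo-adjunctions from Section~3 and then invoke Lemma~\ref{lem:adj}. First I would observe the easy direction: if $\tilde{F}\colon \mathbf{D}^b(A\mbox{-mod})\rightarrow \mathbf{D}^b(B\mbox{-mod})$ extends $F$, then $\tilde{F}$ has a right adjoint. Indeed, one shows directly that $\tilde{F}$ admits a right pseudo-adjoint in the sense of Section~3 (namely $({^\vee}\tilde{F})$ in the notation there, or rather an honest right adjoint constructed via Theorem~\ref{thm:1} applied to the cohomological functors ${\rm Hom}_{\mathbf{D}^b(B\mbox{-mod})}(\tilde{F}(-),X)$ restricted to $\mathbf{K}^b(A\mbox{-proj})$); because $\tilde{F}$ sends $\mathbf{K}^b(A\mbox{-proj})$ into $\mathbf{K}^b(B\mbox{-proj})$, the adjunction data can be checked to be an honest adjunction of functors on the whole derived categories. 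Restricting this right adjoint along the inclusion $\mathbf{K}^b\subseteq \mathbf{D}^b$ and using that $\tilde{F}$ restricts to $F$ then produces a left adjoint of $F$. Alternatively, and more cleanly, once $\tilde{F}$ is known to have a right adjoint $G$ with $G^\vee$ making sense, one applies Lemma~\ref{lem:adj}(1): the pair $(F,G|_{\cdots})$ need not land in the right place, so instead I would argue via the $2$-equivalence directly — but the most transparent route is the one below.

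For the converse and the bulk of the content, suppose $F$ has a left adjoint $F_\ell\colon \mathbf{K}^b(B\mbox{-proj})\rightarrow \mathbf{K}^b(A\mbox{-proj})$. By Lemma~\ref{lem:adj}(1), applied to the pair $(F_\ell, F)$, the pair $(F_\ell^\vee, F^\vee)$ is adjoint, so $F^\vee\colon \mathbf{D}^b(B\mbox{-mod})\rightarrow \mathbf{D}^b(A\mbox{-mod})$ has a left adjoint. I would then set $\tilde{F} := {}^\vee(F_\ell^\vee) \cong F^{\vee\vee}$ — wait, the correct candidate is the left pseudo-adjoint construction applied to $F^\vee$, i.e. $\tilde{F} := {}^\vee(F^\vee)$ would recover $F$ on $\mathbf{K}^b$, not extend it. The right candidate is $\tilde{F} := (F_\ell^\vee)$ itself? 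No: $F_\ell^\vee\colon \mathbf{D}^b(A\mbox{-mod})\rightarrow \mathbf{D}^b(B\mbox{-mod})$ goes the right way. The key point is that $F_\ell^\vee$ is right adjoint to $F^\vee$ (by Lemma~\ref{lem:adj}(1) again, reading the adjoint pair the other way), hence $F_\ell^\vee$ has a right adjoint automatically, and one must check $F_\ell^\vee(\mathbf{K}^b(A\mbox{-proj}))\subseteq \mathbf{K}^b(B\mbox{-proj})$ together with $F_\ell^\vee|_{\mathbf{K}^b(A\mbox{-proj})}\cong F$. The first inclusion I would verify using Lemma~\ref{lem:perf}: $Y\in \mathbf{K}^b(B\mbox{-proj})$ iff ${\rm Hom}(Y,\Sigma^i X)=0$ for all $X$ and $i\gg 0$; since $F_\ell^\vee$ is a left adjoint (to $F^\vee$) it preserves this Hom-vanishing condition after transporting along the adjunction, using that $F$ lands in perfect complexes. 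For the identification $F_\ell^\vee|_{\mathbf{K}^b(A\mbox{-proj})}\cong F$, I would chase the pseudo-adjunction isomorphisms $\Phi,\Psi$: for $P\in\mathbf{K}^b(A\mbox{-proj})$ and $Q\in\mathbf{K}^b(B\mbox{-proj})$, we have ${\rm Hom}(F_\ell^\vee(P),Q)\cong{\rm Hom}(P, (F_\ell)^\vee{}^{\vee}?\dots)$ — concretely, $F_\ell^\vee$ being right adjoint to $F^\vee$, and $F^\vee$ restricting to (the pseudo-adjoint of $F_\ell$, which restricts to $F$ via the adjunction $(F_\ell,F)$), a Yoneda argument on $\mathbf{K}^b(B\mbox{-proj})$ pins down $F_\ell^\vee(P)$ as $F(P)$.

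Finally, uniqueness of $\tilde{F}$: any two extensions agree on $\mathbf{K}^b(A\mbox{-proj})$ up to isomorphism, and since $\mathbf{K}^b(A\mbox{-proj})$ generates $\mathbf{D}^b(A\mbox{-mod})$ as a triangulated category (every object of $\mathbf{D}^b(A\mbox{-mod})$ is built from projective resolutions, i.e. from $A$ by shifts, cones and summands), any triangle functor out of $\mathbf{D}^b(A\mbox{-mod})$ is determined up to isomorphism by its restriction to $\mathbf{K}^b(A\mbox{-proj})$ — here one uses that $\mathbf{D}^b(A\mbox{-mod})$ is the idempotent completion of the thick subcategory generated by $A$, or more simply represents the two extensions as pseudo-adjoints and applies the uniqueness built into Theorem~\ref{thm:1}. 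The last sentence of the Proposition then follows: if $F$ is an equivalence it has a left adjoint (its quasi-inverse), so $\tilde{F}$ exists; and $F\cong\tilde{F}|_{\mathbf{K}^b(A\mbox{-proj})}$ together with Lemma~\ref{lem:adj}(2) (or rather the analogue $F$ equivalence $\iff F^\vee$ equivalence $\iff \tilde F$ equivalence, chasing through $\tilde F = F_\ell^\vee$ and $F_\ell$ being a quasi-inverse of $F$) gives the equivalence statement; conversely if $\tilde{F}$ is an equivalence then so is its restriction $F$ since $\tilde F$ restricts to an equivalence $\mathbf{K}^b(A\mbox{-proj})\to\mathbf{K}^b(B\mbox{-proj})$ by definition of "extends".

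\emph{Main obstacle.} The delicate point is verifying that the candidate $\tilde{F}=F_\ell^\vee$ genuinely restricts back to $F$ (not merely to \emph{some} functor agreeing with $F$ on the generator $A$) and that it preserves $\mathbf{K}^b(\cdot\mbox{-proj})$; both require a careful compatibility check between the honest adjunction $(F_\ell,F)$ on the homotopy categories and the pseudo-adjunction isomorphisms $\Phi,\Psi$ defining $(-)^\vee$, i.e. that the square relating the unit/counit of $(F_\ell,F)$ to $\Phi,\Psi$ commutes. This is the step where the "substantial difference" from \cite{Ball} referred to before the Lemma will show up, since in the scheme setting one does not have the clean description of $\mathbf{K}^b(A\mbox{-proj})$ inside $\mathbf{D}^b(A\mbox{-mod})$ afforded by Lemma~\ref{lem:perf}.
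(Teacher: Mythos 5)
Your overall strategy matches the paper's: take the left adjoint $F_\ell$ of $F$, form its right pseudo-adjoint $F_\ell^\vee$, use Lemma~\ref{lem:adj}(1) to see that $(F_\ell^\vee,F^\vee)$ is an adjoint pair, and identify $F_\ell^\vee$ as the extension. But two steps, as you have primarily written them, would fail. First, your lead argument for uniqueness rests on the claim that $\mathbf{K}^b(A\mbox{-proj})$ generates $\mathbf{D}^b(A\mbox{-mod})$ as a triangulated category (``every object \ldots is built from $A$ by shifts, cones and summands''). This is false exactly in the case of interest: $\mathbf{K}^b(A\mbox{-proj})$ is already a thick (hence idempotent complete) subcategory of $\mathbf{D}^b(A\mbox{-mod})$, and it is a proper one whenever $A$ has infinite global dimension --- e.g.\ the simple module over $k[x]/(x^2)$ is not a perfect complex. (Even granting generation, a triangle functor is not in general determined up to isomorphism by its restriction to a generating subcategory.) Your parenthetical fallback is the correct route, and it also cleanly settles the ``only if'' direction that your first paragraph muddles by trying to restrict a \emph{right} adjoint of $\tilde F$ (which has the wrong variance to yield a left adjoint of $F$): for any extension $\tilde F$ one has $(Q,F(P))\cong(Q,\tilde F(P))\cong({^\vee\tilde F}(Q),P)$, so ${^\vee\tilde F}$ is a left adjoint of $F$; by uniqueness of adjoints ${^\vee\tilde F}\cong F_\ell$, and then $\tilde F\cong({^\vee\tilde F})^\vee\cong F_\ell^\vee$ by the isomorphism $G\cong({^\vee G})^\vee$ from Theorem~\ref{thm:2}.

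Second, the identification $F_\ell^\vee|_{\mathbf{K}^b(A\mbox{-}{\rm proj})}\cong F$: a ``Yoneda argument on $\mathbf{K}^b(B\mbox{-proj})$'', i.e.\ computing ${\rm Hom}(F_\ell^\vee(P),Q)$ for perfect $Q$, cannot pin down the object $F_\ell^\vee(P)$ of $\mathbf{D}^b(B\mbox{-mod})$; by the first equivalence of Theorem~\ref{thm:1}, such an object is determined by ${\rm Hom}(-,X)|_{\mathbf{K}^b(B\mbox{-}{\rm proj})}$, i.e.\ by maps \emph{from} perfect complexes, not into them. The correct (and simpler) chase is $(F(P),X)\xrightarrow{\Phi_{P,X}}(P,F^\vee(X))\xrightarrow{\sim}(F_\ell^\vee(P),X)$, natural in $X$ ranging over \emph{all} of $\mathbf{D}^b(B\mbox{-mod})$, the second map being the adjunction $(F_\ell^\vee,F^\vee)$; Yoneda in $\mathbf{D}^b(B\mbox{-mod})$ then gives $F_\ell^\vee(P)\cong F(P)$, which simultaneously yields the inclusion $F_\ell^\vee(\mathbf{K}^b(A\mbox{-proj}))\subseteq\mathbf{K}^b(B\mbox{-proj})$ that you planned to prove separately via Lemma~\ref{lem:perf}. (Also, $(F_\ell^\vee,F^\vee)$ adjoint means $F_\ell^\vee$ is \emph{left} adjoint to $F^\vee$, not right adjoint as you wrote; your conclusion that $F_\ell^\vee$ therefore has a right adjoint, namely $F^\vee$, is nonetheless the one needed.) The statements about equivalences go through as you describe, via Lemma~\ref{lem:adj}(2) and Lemma~\ref{lem:perf}.
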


\begin{proof}
For the ``only if" part of the first statement, we assume that $\tilde{F}$ extends $F$. For each $Q\in \mathbf{K}^b(B\mbox{-}{\rm proj})$ and $P\in \mathbf{K}^b(A\mbox{-}{\rm proj})$, we have bifunctorial isomorphisms
$$(Q, F(P))\stackrel{\sim}\longrightarrow (Q, \tilde{F}(P)) \stackrel{\Psi_{Q, P}}\longrightarrow ({^\vee \tilde{F}}(Q), P).$$
This yields the required adjunction.

For the ``if" part, we assume that $(F_1, F)$ is an adjoint pair. Then by Lemma~\ref{lem:adj}(1), we have an adjoint pair $(F_1^\vee, F^\vee)$. For each $P\in \mathbf{K}^b(A\mbox{-}{\rm proj})$ and $X\in \mathbf{D}^b(B\mbox{-mod})$, we have bifunctorial isomorphisms
$$(F(P), X)\stackrel{\Phi_{P, X}}\longrightarrow (P, F^\vee(X)) \stackrel{\sim}\longrightarrow (F_1^\vee(P), X).$$
By Yoneda's Lemma, we have a natural isomorphism $F(P)\simeq F_1^\vee(P)$, that is, $F_1^\vee$ extends $F$.

For the uniqueness of $\tilde{F}$, we observe that ${^\vee(\tilde{F})}$ is isomorphic to the left adjoint $F_1$ of $F$. It follows that $\tilde{F}\simeq F_1^\vee$, in particular, it admits a right adjoint $F^\vee$. If $F$ is an equivalence, then $F_1$ and thus $F_1^\vee$ are equivalences. This proves the ``only if" part of the last statement. The ``if" part is well known; see Lemma \ref{lem:perf}.
\end{proof}

An  analogue of the following result for  projective schemes is mentioned in \cite[Remark 4.6]{Ball} with a different argument.

\begin{cor}
Let $G\colon \mathbf{D}^b(A\mbox{-}{\rm mod})\rightarrow \mathbf{D}^b(B\mbox{-}{\rm mod})$ be a triangle functor. Then $G$ has a right adjoint if and only if $G(\mathbf{K}^b(A\mbox{-}{\rm proj}))\subseteq \mathbf{K}^b(B\mbox{-}{\rm proj})$.
\end{cor}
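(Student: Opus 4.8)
The plan is to deduce this corollary from Proposition~\ref{prop:extension} together with the pseudo-adjunction machinery developed above, by applying the ``$F \mapsto F^\vee$'' and ``$G \mapsto {}^\vee G$'' constructions in both directions. Note that the corollary has no restriction map attached to the algebras, so there is no danger in conflating $A$ and $B$: the statement is genuinely about a single triangle functor $G$ between derived categories.

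For the ``if'' direction, suppose $G(\mathbf{K}^b(A\mbox{-proj})) \subseteq \mathbf{K}^b(B\mbox{-proj})$. Then the restriction $F := G|_{\mathbf{K}^b(A\mbox{-proj})} \colon \mathbf{K}^b(A\mbox{-proj}) \rightarrow \mathbf{K}^b(B\mbox{-proj})$ is a triangle functor, and by definition $G$ extends $F$. By Proposition~\ref{prop:extension}, since $F$ admits an extension, $F$ has a left adjoint; moreover the same proposition tells us that the extension of $F$ is unique up to isomorphism and necessarily has a right adjoint. Hence $\tilde F$ — which is isomorphic to $G$ — has a right adjoint, so $G$ has a right adjoint.

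For the ``only if'' direction, suppose $G$ has a right adjoint, say $(G, G_1)$ is an adjoint pair with $G_1 \colon \mathbf{D}^b(B\mbox{-mod}) \rightarrow \mathbf{D}^b(A\mbox{-mod})$. The idea is to pass to left pseudo-adjoints: by (a $\mathbb{D}^b$-version of) Lemma~\ref{lem:adj}(1) applied via Theorem~\ref{thm:2}, an adjoint pair $(G, G_1)$ of $1$-morphisms in $\mathbb{D}^b$ corresponds under the $2$-equivalence to an adjoint pair $({}^\vee G, {}^\vee G_1)$ of $1$-morphisms in $\mathbb{K}^b$ — being careful with the variance reversal built into $(\mathbb{D}^b)^{\rm coop}$, so that a right adjoint of $G$ corresponds to a left adjoint of ${}^\vee G$ (or vice versa, depending on how one tracks the $\rm coop$). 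Either way, ${}^\vee G$ has an adjoint on the appropriate side, so by Proposition~\ref{prop:extension} the triangle functor ${}^\vee G \colon \mathbf{K}^b(A\mbox{-proj}) \rightarrow \mathbf{K}^b(B\mbox{-proj})$ admits an extension $\widetilde{{}^\vee G} \colon \mathbf{D}^b(A\mbox{-mod}) \rightarrow \mathbf{D}^b(B\mbox{-mod})$. Finally, using the uniqueness part of Proposition~\ref{prop:extension} and the natural isomorphism $G \simeq ({}^\vee G)^\vee$ from the proof of Theorem~\ref{thm:2}, one identifies $G$ with this extension $\widetilde{{}^\vee G}$; since an extension by definition carries $\mathbf{K}^b(A\mbox{-proj})$ into $\mathbf{K}^b(B\mbox{-proj})$, we conclude $G(\mathbf{K}^b(A\mbox{-proj})) \subseteq \mathbf{K}^b(B\mbox{-proj})$.

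The main obstacle I anticipate is bookkeeping the handedness: Proposition~\ref{prop:extension} is phrased in terms of a \emph{left} adjoint of $F$ producing an extension with a \emph{right} adjoint, while the $2$-equivalence of Theorem~\ref{thm:2} lands in the \emph{bidual} $(\mathbb{D}^b)^{\rm coop}$ (reversing both $1$- and $2$-morphisms), so the correspondence between ``$G$ has a right adjoint'' and ``${}^\vee G$ has a left adjoint'' must be checked with some care — one must verify that the variance reversals compose to give the statement of the corollary and not its mirror image. A secondary point needing attention is that Lemma~\ref{lem:adj} is stated for $\mathbb{K}^b$-morphisms $F, F_1$ and their $F^\vee, F_1^\vee$; here we need the symmetric statement obtained by starting in $\mathbb{D}^b$ and applying ${}^\vee(-)$, which is immediate from the same $2$-functoriality but should be invoked explicitly. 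Neither difficulty is serious; once the directions are pinned down, everything follows formally from the results already in hand.
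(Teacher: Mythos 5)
Your ``if'' direction is exactly the paper's argument: restrict $G$ to $F=G|_{\mathbf{K}^b(A\mbox{-}{\rm proj})}$, observe that $G$ extends $F$, and invoke Proposition \ref{prop:extension} to conclude that the (essentially unique) extension has a right adjoint. No issues there.

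The ``only if'' direction as you have written it does not go through, and the problem is precisely the handedness/direction bookkeeping that you flag but then wave away with ``either way''. First, for $G\colon \mathbf{D}^b(A\mbox{-}{\rm mod})\to \mathbf{D}^b(B\mbox{-}{\rm mod})$ the left pseudo-adjoint goes the \emph{other} way: ${}^\vee G\colon \mathbf{K}^b(B\mbox{-}{\rm proj})\to \mathbf{K}^b(A\mbox{-}{\rm proj})$ (the construction is contravariant on $1$-morphisms). Consequently an extension of ${}^\vee G$ would be a functor $\mathbf{D}^b(B\mbox{-}{\rm mod})\to \mathbf{D}^b(A\mbox{-}{\rm mod})$ and can never be identified with $G$; and $({}^\vee G)^\vee$ is the right pseudo-adjoint of ${}^\vee G$, not its extension, so the step ``identify $G$ with $\widetilde{{}^\vee G}$ via $G\simeq ({}^\vee G)^\vee$'' does not typecheck. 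Second, Proposition \ref{prop:extension} needs a \emph{left} adjoint of the functor being extended, whereas from $G\dashv G_1$ the $2$-equivalence gives ${}^\vee G\dashv {}^\vee G_1$; so the functor satisfying the hypothesis of Proposition \ref{prop:extension} is ${}^\vee G_1\colon \mathbf{K}^b(A\mbox{-}{\rm proj})\to \mathbf{K}^b(B\mbox{-}{\rm proj})$, not ${}^\vee G$. The repaired argument reads: ${}^\vee G_1$ has left adjoint ${}^\vee G$, hence by (the proof of) Proposition \ref{prop:extension} it admits the extension $({}^\vee G)^\vee\simeq G$; since extensions by definition carry $\mathbf{K}^b(A\mbox{-}{\rm proj})$ into $\mathbf{K}^b(B\mbox{-}{\rm proj})$, we are done. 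Equivalently and more directly, $\mathrm{Hom}(G(P),X)\simeq \mathrm{Hom}(P,G_1(X))\simeq \mathrm{Hom}(({}^\vee G_1)(P),X)$ gives $G(P)\simeq ({}^\vee G_1)(P)$ by Yoneda. So the gap is repairable, but the specific functors you name are the wrong ones, which is exactly the ``mirror image'' failure you were worried about.

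For comparison, the paper avoids the pseudo-adjoint machinery entirely in this direction: from the adjunction $\mathrm{Hom}(G(P),\Sigma^i(X))\simeq \mathrm{Hom}(P,\Sigma^iG_1(X))$ and the boundedness of $P$ and $G_1(X)$ one gets vanishing for $i\gg 0$, and then Lemma \ref{lem:perf} (the intrinsic characterization of $\mathbf{K}^b(A\mbox{-}{\rm proj})$ inside $\mathbf{D}^b(A\mbox{-}{\rm mod})$) immediately yields $G(P)\in \mathbf{K}^b(B\mbox{-}{\rm proj})$. That route is shorter and sidesteps all of the variance issues; I would recommend it, or else the corrected pseudo-adjoint argument above with the functors in their proper places.
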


\begin{proof}
The ``only if" part is well known. Assume that $G$ has a right adjoint $G_1$. Let $P\in \mathbf{K}^b(A\mbox{-}{\rm proj})$. The adjunction
$${\rm Hom}_{\mathbf{D}^b(B\mbox{-}{\rm mod})}(G(P), \Sigma^i(X))\simeq {\rm Hom}_{\mathbf{D}^b(A\mbox{-}{\rm mod})}(P, \Sigma^iG_1(X))$$
implies that ${\rm Hom}_{\mathbf{D}^b(B\mbox{-}{\rm mod})}(G(P), \Sigma^i(X))=0$ for each $X\in \mathbf{D}^b(B\mbox{-}{\rm mod})$ and $i\gg 0$. In view of Lemma \ref{lem:perf}, the complex $G(P)$ lies in $\mathbf{K}^b(B\mbox{-proj})$.

For the ``if" part, we denote by $F=G|_{\mathbf{K}^b(A\mbox{-}{\rm proj})}\colon \mathbf{K}^b(A\mbox{-}{\rm proj})\rightarrow \mathbf{K}^b(B\mbox{-}{\rm proj})$ the restriction of $G$. In particular, $G$ extends $F$. Then the existence of the right adjoint is proved in Proposition \ref{prop:extension}.
\end{proof}

For a triangulated category $\mathcal{T}$, ${\rm Aut}_\vartriangle(\mathcal{T})$ denotes its triangle autoequivalence group, which consists of the isomorphism classes of triangle autoequivalences on $\mathcal{T}$ and whose multiplication is induced by the composition of autoequivalences.

\begin{cor}\label{cor:auto}
The restriction  homomorphism between triangle autoequivalence groups
$${\rm Aut}_\vartriangle(\mathbf{D}^b(A\mbox{-}{\rm mod})) \stackrel{\rm res}\longrightarrow {\rm Aut}_\vartriangle(\mathbf{K}^b(A\mbox{-}{\rm proj})), \quad G\mapsto G|_{\mathbf{K}^b(A\mbox{-}{\rm proj})}$$
is an isomorphism.
\end{cor}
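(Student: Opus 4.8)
The plan is to show that \textrm{res} is both surjective and injective, with essentially all the work already packaged into Proposition~\ref{prop:extension}. First I would record that \textrm{res} is well defined. By Lemma~\ref{lem:perf}, the subcategory $\mathbf{K}^b(A\mbox{-proj})$ is described intrinsically inside $\mathbf{D}^b(A\mbox{-mod})$ as the full subcategory of objects $Y$ with ${\rm Hom}_{\mathbf{D}^b(A\mbox{-mod})}(Y,\Sigma^i X)=0$ for all $X$ and $i\gg 0$; hence any triangle autoequivalence $G$, together with its quasi-inverse, preserves $\mathbf{K}^b(A\mbox{-proj})$. Consequently $G|_{\mathbf{K}^b(A\mbox{-proj})}$ is again a triangle autoequivalence, and since the restriction of a composite is the composite of the restrictions, \textrm{res} is a group homomorphism.

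For surjectivity, let $F\colon \mathbf{K}^b(A\mbox{-proj})\rightarrow \mathbf{K}^b(A\mbox{-proj})$ be a triangle autoequivalence. Being an equivalence, $F$ admits a left adjoint, namely any quasi-inverse, so Proposition~\ref{prop:extension} applies and yields an extension $\tilde{F}\colon \mathbf{D}^b(A\mbox{-mod})\rightarrow \mathbf{D}^b(A\mbox{-mod})$; moreover the last assertion of that proposition guarantees that $\tilde{F}$ is itself an equivalence. Thus $[\tilde{F}]\in {\rm Aut}_\vartriangle(\mathbf{D}^b(A\mbox{-mod}))$, and by the definition of ``extends'' we have ${\rm res}([\tilde{F}])=[\tilde{F}|_{\mathbf{K}^b(A\mbox{-proj})}]=[F]$.

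For injectivity, suppose $G$ is a triangle autoequivalence of $\mathbf{D}^b(A\mbox{-mod})$ with ${\rm res}([G])$ trivial, i.e. $F:=G|_{\mathbf{K}^b(A\mbox{-proj})}\simeq {\rm id}_{\mathbf{K}^b(A\mbox{-proj})}$. Then $G$ is an extension of $F$, and so is the identity functor ${\rm id}_{\mathbf{D}^b(A\mbox{-mod})}$, since it restricts to ${\rm id}_{\mathbf{K}^b(A\mbox{-proj})}\simeq F$. As $F$ is an equivalence, hence has a left adjoint, the uniqueness clause of Proposition~\ref{prop:extension} forces $G\simeq {\rm id}_{\mathbf{D}^b(A\mbox{-mod})}$. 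Therefore $\ker({\rm res})$ is trivial, and combined with the previous paragraph, \textrm{res} is an isomorphism.

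I do not anticipate a real obstacle, as the substantive content lives in Proposition~\ref{prop:extension} and Lemma~\ref{lem:perf}. The one step needing a little care is the well-definedness of \textrm{res}: one must use Lemma~\ref{lem:perf} for both $G$ and its quasi-inverse to conclude that $\mathbf{K}^b(A\mbox{-proj})$ is stable under $G$, which is what makes $G|_{\mathbf{K}^b(A\mbox{-proj})}$ an autoequivalence and \textrm{res} multiplicative. (Alternatively, one may appeal to Lemma~\ref{lem:adj}(2) together with Theorem~\ref{thm:2}, but the route through Proposition~\ref{prop:extension} identifies \textrm{res} itself as the isomorphism most directly.)
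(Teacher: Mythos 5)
Your proposal is correct and follows essentially the same route as the paper: both surjectivity and injectivity are deduced from Proposition~\ref{prop:extension} (existence of an extension for an autoequivalence, which necessarily has a left adjoint, plus the uniqueness-up-to-isomorphism of extensions). The extra remarks on the well-definedness of ${\rm res}$ via Lemma~\ref{lem:perf} are sound but left implicit in the paper.
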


\begin{proof}
Since $G$ extends $G|_{\mathbf{K}^b(A\mbox{-}{\rm proj})}$, the required injectivity follows from the uniqueness of the extension functor in Proposition \ref{prop:extension}. On the other hand, we infer from Proposition \ref{prop:extension} that each triangle autoequivalence on $\mathbf{K}^b(A\mbox{-}{\rm proj})$ extends to a triangle autoequivalence on $\mathbf{D}^b(A\mbox{-}{\rm mod})$. This implies the required surjectivity.
\end{proof}

We mention that Corollary \ref{cor:auto} is related to the work \cite{CY}.

Recall from \cite{Ye,RZ} that ${\rm DPic}(A)$ is the \emph{derived Picard group} of $A$, whose elements are the isomorphism classes of two-sided tilting complexes of $A$-modules and whose multiplication is given by the derived tensor product over $A$. The evaluation homomorphism
$${\rm ev}\colon {\rm DPic}(A) \longrightarrow {\rm Aut}_\vartriangle(\mathbf{D}^b(A\mbox{-mod}))$$
sends a two-sided tilting complex $X$ to the derived tensor functor $X\otimes_A^\mathbb{L}-$.

Recall from \cite{CY} that an additive category $\mathcal{P}$ is \emph{$\mathbf{K}$-standard} if the following condition is satisfied: each triangle autoequivalence $F$ on $\mathbf{K}^b(\mathcal{P})$ is isomorphic to the identity functor as triangle functors, provided that it satisfies $F(\mathcal{P})\subseteq \mathcal{P}$ and that $F|_\mathcal{P}\colon \mathcal{P}\rightarrow \mathcal{P}$ is isomorphic to the identity functor. Similarly, an abelian category $\mathcal{A}$ is \emph{$\mathbf{D}$-standard} if the following condition is satisfied: each triangle autoequivalence $F$ on $\mathbf{D}^b(\mathcal{A})$ is isomorphic to the identity functor as triangle functors, provided that it satisfies $F(\mathcal{A})\subseteq \mathcal{A}$ and that $F|_\mathcal{A}\colon \mathcal{A}\rightarrow \mathcal{A}$ is isomorphic to the identity functor.

The following known results are one of the main motivations for these concepts.

\begin{lem}\label{lem:stand}
The following statements hold.
\begin{enumerate}
\item The module category $A\mbox{-}{\rm mod}$ is $\mathbf{D}$-standard if and only if the evaluation homomorphsim ``${\rm ev}$" is surjective.
\item The category $A\mbox{-}{\rm proj}$ is $\mathbf{K}$-standard if and only if the composition  ``${\rm res}\circ {\rm ev}$" is surjective.
\end{enumerate}
\end{lem}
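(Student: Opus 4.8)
The plan is to deduce both statements from a single normal-form result for autoequivalences of $\mathbf{D}^b(A\mbox{-}{\rm mod})$: \emph{every triangle autoequivalence $G$ of $\mathbf{D}^b(A\mbox{-}{\rm mod})$ is isomorphic, as a triangle functor, to $(X\otimes^\mathbb{L}_A-)\circ G_0$, where $X$ is a two-sided tilting complex over $A$ and $G_0$ is a triangle autoequivalence with $G_0(A\mbox{-}{\rm mod})\subseteq A\mbox{-}{\rm mod}$ and $G_0|_{A\mbox{-}{\rm mod}}\simeq {\rm id}$}. To obtain this I would argue as follows. The complex $T=G(A)$ is a tilting complex over $A$ whose endomorphism algebra in $\mathbf{D}^b(A\mbox{-}{\rm mod})$ is isomorphic to that of $A$; by Rickard's theorem (see \cite{Ric}) there is a two-sided tilting complex $X_0$ whose underlying one-sided complex is isomorphic to $T$, so that $(X_0\otimes^\mathbb{L}_A-)(A)\simeq T=G(A)$. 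Hence $G':=(X_0\otimes^\mathbb{L}_A-)^{-1}\circ G$ satisfies $G'(A)\simeq A$. Since fixing the generator $A$ forces $G'$ to respect the standard $t$-structure (detected by the functors ${\rm Hom}_{\mathbf{D}^b(A\mbox{-}{\rm mod})}(A,\Sigma^n-)$), the equivalence $G'$ preserves the heart $A\mbox{-}{\rm mod}$; the restriction $G'|_{A\mbox{-}{\rm mod}}$ is then a self-equivalence of $A\mbox{-}{\rm mod}$ fixing the projective generator $A$, so by Morita theory it is isomorphic to $A_\psi\otimes_A-$ for some algebra automorphism $\psi$ of $A$ (here $A_\psi$ is the invertible bimodule obtained from $A$ by twisting one action by $\psi$). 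Then $G_0:=(A_\psi\otimes^\mathbb{L}_A-)^{-1}\circ G'$ again fixes $A$, hence preserves $A\mbox{-}{\rm mod}$, with $G_0|_{A\mbox{-}{\rm mod}}\simeq{\rm id}$; and $X:=X_0\otimes^\mathbb{L}_A A_\psi$ is a two-sided tilting complex with $G\simeq(X\otimes^\mathbb{L}_A-)\circ G_0$.

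For part (1): in the direction from $\mathbf{D}$-standardness to surjectivity of ``${\rm ev}$'', I apply the normal form to an arbitrary $G$, whereupon the defining property of $\mathbf{D}$-standardness applied to $G_0$ gives $G_0\simeq{\rm id}$ as triangle functors, so $G\simeq X\otimes^\mathbb{L}_A-={\rm ev}(X)$. Conversely, assuming ``${\rm ev}$'' is surjective, let $G_0$ be a triangle autoequivalence with $G_0(A\mbox{-}{\rm mod})\subseteq A\mbox{-}{\rm mod}$ and $G_0|_{A\mbox{-}{\rm mod}}\simeq{\rm id}$; write $G_0\simeq X\otimes^\mathbb{L}_A-$ for a two-sided tilting complex $X$. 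Evaluating at $A$ gives $X\simeq G_0(A)\simeq A$ as complexes of left $A$-modules, so $X$ is the stalk complex of an invertible bimodule $A_\psi$; the hypothesis $G_0|_{A\mbox{-}{\rm mod}}\simeq{\rm id}$ then forces $\psi$ to be inner, whence $A_\psi\simeq A$ as bimodules and $G_0\simeq{\rm id}$ as triangle functors. Thus $A\mbox{-}{\rm mod}$ is $\mathbf{D}$-standard.

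For part (2), I would run the parallel argument, using Corollary \ref{cor:auto} to pass between the two autoequivalence groups. Given $F\in{\rm Aut}_\vartriangle(\mathbf{K}^b(A\mbox{-}{\rm proj}))$, let $G$ be the triangle autoequivalence of $\mathbf{D}^b(A\mbox{-}{\rm mod})$ with ${\rm res}(G)=F$ provided by Corollary \ref{cor:auto}, and put $G$ in the normal form $G\simeq(X\otimes^\mathbb{L}_A-)\circ G_0$. Since $G_0$ fixes $A$, it preserves $A\mbox{-}{\rm proj}={\rm add}\,A$, hence the thick subcategory $\mathbf{K}^b(A\mbox{-}{\rm proj})$, and $F_0:=G_0|_{\mathbf{K}^b(A\mbox{-}{\rm proj})}$ is a triangle autoequivalence with $F_0(A\mbox{-}{\rm proj})\subseteq A\mbox{-}{\rm proj}$ and $F_0|_{A\mbox{-}{\rm proj}}\simeq{\rm id}$; if $A\mbox{-}{\rm proj}$ is $\mathbf{K}$-standard then $F_0\simeq{\rm id}$, so $F\simeq(X\otimes^\mathbb{L}_A-)|_{\mathbf{K}^b(A\mbox{-}{\rm proj})}=({\rm res}\circ{\rm ev})(X)$. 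Conversely, if ``${\rm res}\circ{\rm ev}$'' is surjective and $F_0$ is a triangle autoequivalence of $\mathbf{K}^b(A\mbox{-}{\rm proj})$ with $F_0(A\mbox{-}{\rm proj})\subseteq A\mbox{-}{\rm proj}$ and $F_0|_{A\mbox{-}{\rm proj}}\simeq{\rm id}$, write $F_0\simeq(X\otimes^\mathbb{L}_A-)|_{\mathbf{K}^b(A\mbox{-}{\rm proj})}$ and evaluate at $A$ to obtain $X\simeq A\simeq A_\psi$ with $\psi$ inner, exactly as in part (1); hence $F_0\simeq{\rm id}$ as triangle functors and $A\mbox{-}{\rm proj}$ is $\mathbf{K}$-standard.

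The step I expect to be the main obstacle is making the normal-form reduction genuinely precise rather than merely plausible: one has to pin down the conventions in Rickard's theorem (on which side the two-sided tilting complex restricts to, and whether the relevant endomorphism algebra is $A$ or $A^{\rm op}$), verify that the two standard factors compose to the single two-sided tilting complex $X=X_0\otimes^\mathbb{L}_A A_\psi$ and that every isomorphism invoked is one of triangle functors, and record the elementary Morita-theoretic facts that a self-equivalence of $A\mbox{-}{\rm mod}$ fixing the progenerator $A$ is exactly a twist by an algebra automorphism and that such a twist is isomorphic to the identity functor precisely when the automorphism is inner. All of this is classical and is in substance contained in \cite{CY}; once it is in place the remainder is formal, the only nontrivial external inputs being Rickard's theorem and Corollary \ref{cor:auto}.
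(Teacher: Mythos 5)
Your argument is correct in substance, but it is worth noting that the paper itself does not prove this lemma: it simply cites \cite[Theorem 5.10]{CY} for (1) and declares (2) ``similar'', so your proposal supplies the content that the paper outsources. The normal form $G\simeq (X\otimes^{\mathbb{L}}_A-)\circ G_0$ with $G_0$ heart-preserving and $G_0|_{A\mbox{-}{\rm mod}}\simeq{\rm id}$ is exactly the mechanism behind the cited result (note that $G(A)$ lands in $\mathbf{K}^b(A\mbox{-}{\rm proj})$ because that subcategory is intrinsic by Lemma \ref{lem:perf}, and that the containment $G_0(A\mbox{-}{\rm mod})\subseteq A\mbox{-}{\rm mod}$ holds only after replacing $G_0$ by an isomorphic functor --- a strictness point that \cite{CY} handles via ``pseudo-identities'' and that you should at least acknowledge). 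The one genuine structural divergence is in part (2): you lift $F$ to $\mathbf{D}^b(A\mbox{-}{\rm mod})$ via Corollary \ref{cor:auto} and then restrict, whereas the paper's ``similar argument'' is meant to run entirely inside $\mathbf{K}^b(A\mbox{-}{\rm proj})$: since $F(A)$ is again a tilting complex with endomorphism algebra $A^{\rm op}$, one applies Rickard's theorem directly, and $F'(A)\simeq A$ forces $F'$ to preserve ${\rm add}(A)=A\mbox{-}{\rm proj}$, after which the Morita step is identical. Your route is legitimate (Corollary \ref{cor:auto} is established before this lemma, so there is no circularity), but the direct route is available, slightly more economical, and keeps (2) independent of the extension machinery. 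The remaining items you flag --- the side conventions in \cite{Ric}, the composite $X=X_0\otimes^{\mathbb{L}}_A A_\psi$, and the facts that a self-equivalence of $A\mbox{-}{\rm mod}$ fixing $A$ is a twist $A_\psi\otimes_A-$ and is isomorphic to the identity exactly when $\psi$ is inner --- are indeed the only places where care is needed, and all are classical.
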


\begin{proof}
Recall that the surjectivity of ``${\rm ev}$" is equivalent to the condition that every derived autoequivalences on $\mathbf{D}^b(A\mbox{-mod})$ is standard. Then (1) is contained in \cite[Theorem 5.10]{CY}. By a similar argument for $\mathbf{K}^b(A\mbox{-proj})$, one proves (2).
\end{proof}

Combing Corollary \ref{cor:auto} and Lemma \ref{lem:stand}, we have the following immediate consequence.

\begin{cor}\label{cor:stand}
Let $A$ be a finite dimensional $k$-algebra. Then $A\mbox{-}{\rm proj}$ is $\mathbf{K}$-standard if and only if $A\mbox{-}{\rm mod}$ is $\mathbf{D}$-standard. \hfill $\square$
\end{cor}

We mention that the ``only if" part is known. Indeed, let $\mathcal{A}$ be an abelian category with enough projective objects. Denote by $\mathcal{P}$ its full subcategory formed by projective objects. By \cite[Theorem 6.1]{CY}, the $\mathbf{K}$-standardness of $\mathcal{P}$ implies the $\mathbf{D}$-standardness of $\mathcal{A}$. In view of  Corollary \ref{cor:stand}, we expect that the inverse implication is true. This is related to the following question: does any triangle autoequivalence on $\mathbf{K}^b(\mathcal{P})$ extend to a triangle autoequivalence on $\mathbf{D}^b(\mathcal{A})$?

\vskip 10pt

\noindent {\bf Acknowledgements}.\quad The author thanks Jian Liu and Dong Yang for helpful comments. This work is supported by the National Natural Science Foundation of China (Nos. 11522113 and 11671245),  the Fundamental Research Funds for the Central Universities,  and Anhui Initiative in Quantum Information Technologies (AHY150200).

\bibliography{}

\vskip 10pt

 {\footnotesize \noindent Xiao-Wu Chen\\
 Key Laboratory of Wu Wen-Tsun Mathematics, Chinese Academy of Sciences,\\
 School of Mathematical Sciences, University of Science and Technology of China, Hefei 230026, Anhui, PR China}

\end{document}